

\documentclass[12pt]{article}
\usepackage{graphicx}
\usepackage{epsfig}
\usepackage{amsthm}
\usepackage{amsmath}
\usepackage{latexsym}
\usepackage{amsfonts}
\usepackage{amssymb}

\textwidth 12cm \textheight 18cm


\newtheoremstyle{theorem}
{10pt} 
{10pt} 
{\sl} 
{\parindent} 
{\bf} 
{. } 
{ } 
{} 
\theoremstyle{theorem}
\newtheorem{theorem}{Theorem}
\newtheorem{corollary}[theorem]{Corollary}

\newtheoremstyle{defi}
{10pt} 
{10pt} 
{\rm} 
{\parindent} 
{\bf} 
{. } 
{ } 
{} 
\theoremstyle{defi}
\newtheorem{definition}[theorem]{Definition}

\newtheoremstyle{defi}
{10pt} 
{10pt} 
{\rm} 
{\parindent} 
{\bf} 
{. } 
{ } 
{} 
\theoremstyle{defi}
\newtheorem{remark}[theorem]{Remark}

\newtheoremstyle{defi}
{10pt} 
{10pt} 
{\rm} 
{\parindent} 
{\bf} 
{. } 
{ } 
{} 
\theoremstyle{defi}
\newtheorem{lemma}[theorem]{Lemma}

\newtheoremstyle{defi}
{10pt} 
{10pt} 
{\rm} 
{\parindent} 
{\bf} 
{. } 
{ } 
{} 
\theoremstyle{defi}
\newtheorem{proposition}[theorem]{Proposition}


\begin{document} 

\title{Reynolds' Limit Formula for Dorodnitzyn's 
Atmospheric Boundary Layer Model in Convective Conditions}

\author{C. V. Valencia-Negrete $^1$ , 
C. Gay-Garc\'{i}a $^2$,
A. A. Carsteanu $^3$\\[6pt]
$^1$ $^3$ Superior School of Physics and Mathematics (ESFM-IPN) \\
National Polytechnic Institute \\
Mexico City - 07738, MEXICO\\
e-mail: ohbicarla@gmail.com\\[6pt]
$^2$ Centre of Atmospheric Sciences (CCA-UNAM)\\
National Autonomous University of Mexico\\
Mexico City - 04510 M\'{E}XICO \\ e-mail: cgay@unam.mx}

\maketitle

\begin{abstract}
Atmospheric convection is an essential aspect of 
atmospheric movement, and it is a source of errors 
in Climate Models. Being able to generate appro\-ximate 
limit formulas and compare the estimations they produce, 
could give a way to reduce them. In this article, 
it is shown that it is enough to assume that the velocity's
$L^2$-norm is bounded, has locally integrable, $L^1_{loc}$,
weak partial derivatives up to order two, and a negligible 
variation of its first velocity's coordinate in direction 
parallel to the surface, to obtain a Reynolds' limit formula 
for a Dorodnitzyn's compressible gaseous Boundary Layer in 
atmospheric conditions.

\medskip

{\bf MSC 2010:}  35Q30, 76N15, 76N20.

{\bf Key Words:}  Gas dynamics, Boundary-layer theory, 
Navier-Stokes equations, Reynold's limit formulas.

\end{abstract}

\section{Introduction}

A suitable approximate model for the air near the Earth's 
surface could tie both the free-stream velocity and the 
no slip condition. In the Theorem~\ref{theo:2.4}, it will 
be shown that there is a Reynolds' limit formula:
\begin{eqnarray*}
   f
    \hspace{2pt}
    \frac{\partial^2 u}{\partial y^2}
&=&
\frac{\partial f}{\partial y}
\hspace{2pt}
\frac{\partial u}{\partial y},
\end{eqnarray*}
for a Dorodnitzyn's compressi\-ble Boundary Layer,
where $u$ is the first veloci\-ty's component,
$f=\left[1-\left(u^2/2i_0\right)\right]^{-6/25}$,
$y$ denotes the height, and $i_0$ is constant.  
In order to do so, we find an estimate, independent 
of the domain's scale:
\begin{eqnarray*}
\Vert \nabla F^{\epsilon} 
 \Vert_{L^2\left(\boldsymbol{\Omega};\mathbb{R}^2\right)}
 &\leq&
\frac{c_2\hspace{2pt}U^3}{2\hspace{2pt}C}.
\end{eqnarray*}
for the $L^2$-norm of a corresponding incompressible 
vector field $F^{\epsilon}$, where $U$ is the air's velocity 
over the Boundary Layer, and $C$ is a constant set out by 
the rest of the boundary conditions given to the initial problem.

The solution procedure consists of three main steps. First, 
the application of Bayada and Chambat's change of variables
transforms the original problem to an adimensional model 
where the e\-ffect of the small parameter of proportion,
\[\epsilon =\max \left\{h(x)\hspace{4pt}|\hspace{4pt}x\in [0,L]\right\}/L,\]
\noindent
on each term, is explicit. Then, an adaptation of Dodordnitzyn's
technique is applied to present it in an incompressible form, 
where Majda's Energy Method is used to obtain a bound that is
independent of $\epsilon$ for the $L^2$-norm of the incompressible
gradient. Finally, we show that the family of solutions to the
adimensional problem, indexed by the small parameter 
$\epsilon$ is contained in a bounded set of a Sobolev space. 
Consequently, the Rellich-Kondrachov Compactness Theorem implies
that the sequence of solutions has a subsequence that converges
uniformly in the space $L^2\left(\boldsymbol{\Omega}\right)$
when the parameter $\epsilon$ tends to zero.


\subsection{Motivation}\label{subsec:1.1}

There is a need to lower biases in continental warmth to obtain 
better atmosphere models G. M. Martin et al. \cite[p. 725]{HadGEM2}. The release of 
energy to the atmosphere by convective parcels contributes to these 
errors. Its calculation has histori\-cally been a way to reduce 
inaccuracies in surface temperature descriptions K. St\"{u}we \cite[p. 59]{Stuwe}. 
A temperature diffe\-rence between a specific surface in contact 
with a gas and its surrounding neighborhood is the origin of a 
vertical draft of air, a natural convection air parcel. 
A sudden expansion of the gas in touch with the increased 
temperature gives a drop in its density, which in turn makes it 
lighter B. R. Morton et al. \cite{Morton56} and A. Bouzinaoui et al.
\cite{bouz07}. However, ascending air acceleration is modeled by
compressi\-ble Navier-Stokes equations P.-L. Lions
\cite{LionsVol1} and 
F. Boyer et al. \cite{BoyerFabrie2013}.
The sugges\-tion of this work is to overcome this difficulty by 
looking for Reynolds' limit formulas, deduced from compressible 
Boundary Layer models. In this article, a first Reynolds' limit 
formula is found for the Dorodnitzyn's ideal gas and constant total
energy Boundary Layer model, which admits an incompressible
adimensional presentation where the evolution parameter problem 
is stated and the convective non linear term estimated through its 
\emph{free-stream} velocity value.


\subsection{Statement of the Problem}\label{subsec:1.2}

An atmospheric gas is a \emph{newtonian fluid}, which implies 
the use of compressible Navier-Stokes equations 
P.-L. Lions \cite{LionsVol1}.
If, instead of considering a Boundary Layer, a two-dimensional
incompressible Navier-Stokes model is applied to study the 
behaviour of a liquid in contact with a solid surface, then 
there exist a smooth solution for each given viscosity value.
For a fixed initial condition, a set of viscosity values has a
corresponding family of well defined classical solutions.
When the viscosity tends to zero, this family of solutions
converges to an Euler's Equations solution with the same 
initial condition 
A. J. Majda et al. \cite{MajBert2002}. 

However, even in the simplest case of an incompressible flow
whose vorticity is zero everywhere on its domain, an Euler's 
solution satisfying the condition of null velocity at $\Gamma_0$, 
has a null velocity throughout 
the whole domain C. V. Valencia \cite[p. 19]{Val2014}.
Therefore, there are no two-dimensional Euler solutions with 
zero vorticity that comply with both the positive hori\-zontal
component of velocity at the top of the domain and the no slip 
condition at its bottom 
H. Schlichting et al. (\cite{SchGers2009} p. $145$).
This motivates the statement of a Boundary Layer model to more
appropriately depict this phenomenon. Moreover, nume\-rical
approximations of bounda\-ry layer solutions describe velocity 
profiles similar to those found in reality 
H. Schlichting \cite[p. 143]{Sch1955}.

In $1935$, Adolf Busemann \cite{Bus1935} proposed the first
compressible Boundary Layer model to represent the behaviour of 
a gas with u\-pper outflow velo\-city smaller than the velocity of
sound, and Prandtl number equal to one.
In his model, pressure terms are discarded, but temperature,
viscosity, and density vary in accordance with ideal gas
empirical properties to more accurately describe an atmospheric
boundary layer moving over a surface. He presents tempe\-rature 
as a function of velocity, and employs it to describe the rest 
of the state variables in terms of velocity as well.

Busemann's model considered a power-law between viscosity and
temperature whose exponent was later corrected in Theodore 
von K\'{a}rm\'{a}n and Hsue-Shen Tsien \cite{VKar1938} $1938$'s 
article, where they developed a different method 
of solution for the same problem. Less than a decade
later, in $1942$, Anatoly Alekseevich Dorodnitsyn \cite{Dorod42} 
postulated a similar model, but allo\-wed pressure to vary with $x$,
which could imply the Boundary Layer to be separated from the
surface. In this work, he defined several changes of varia\-bles.
The first one of these allowed him to write the compressible model
as an incompressible system. Here, we adapt this coordinates' 
change to a similar but not rectangular adimensional domain that
will be obtained from $\boldsymbol{\Omega_h}$, and defined in
Theorem~\ref{theo:2.1}.

Limit formulas for a small parameter of proportion have their origin
in Osborne Reynolds' \cite{Reynolds01011886} article 
``\emph{On the Theory of Lubrication and Its Application to Mr.
Beauchamp Tower's Experiments, Including an Experimental
Determination of the Viscosity of Olive Oil}", published in $1886$. 
Reynolds' Formula was extensively used without a formal proof 
that it was indeed Navier-Stokes Equations' limit when the small
parameter of proportion between the domain's height and its length
tends to zero. This was accomplished a hundred years later by 
Guy Bayada and Mich\`{e}le Chambat \cite{Bayada1986}
for Stokes' Equations. 

In $2009$, Laurent Chupin and R\`{e}my Sart \cite{ChupinSart}
successfully showed, through an application of Didier Bresch 
and Beno\^{i}t Desjardin's Entropy Methods, that the compressible
Reynolds equation is an approximation of compressible Navier-Stokes
equations. For a thin domain filled with gas, the authors
mention that there appears to be only one result 
of this type of problem. This is due to Eduard Marusic-Paloka and
Maja Starcevic \cite{MARUSICPALOKA20104565}
\cite{MARUSICPALOKA2005534}. Marusic-Paloka and Starcevic show 
the convergence of a two-dimensional compressible Stokes Equations. 

In the literature, it doesn't seem to exist a small parameter
asymptotic analysis for a compressible gaseous Boundary Layer model
with a convective non linear term, such as Dorodnitzyn's Model, 
nor an adaptation of Dorodnitzyn's change of variables 
to this particular domain's shape to find a limit formula for a
compressible case in terms of an incompressible expression. 
The main result of this study is stated in Theorem~\ref{theo:2.3} 
and proved in Subection~\ref{subsec:2.3}. Meanwhile, it can be
expressed by the following assertion: Dorodnitzyn's Model may 
be approximated by a limit formula.

\subsection{The Domain}\label{subsec:1.3}

Laminarity ---and therefore two-dimensionality of the domain---
in the liquid's movement when it is in contact with a solid surface 
is a supposition based on experimental observations 
T. von K\'{a}rm\'{a}n et al. \cite{VKar1938} and S. Goldstein
\cite{Goldstein48}, and it is still regarded as a good assumption 
to describe it at an initial stage of a Boundary Layers' motion
K. Gersten \cite[p. 11]{Gersten2009} and S. Goldstein
\cite{Goldstein48}. Here, the Boundary Layer is represented as a
two-dimensional slice where the convective bubble is beginning to
form although it has not yet separated from the surface, and it is
slightly different from the rectangle that constitutes the domain 
in Dorodnitzyn's model.

\begin{figure}
\includegraphics[scale=0.5]{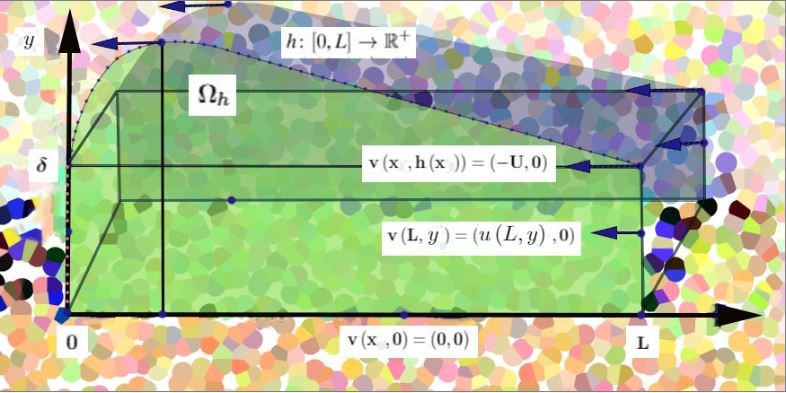}
\centering
\caption{The Domain $\boldsymbol{\Omega_h}$}
\label{fig:Domain}
\end{figure}
\vspace{3mm}

\begin{definition}\label{defi1.1}
Let $h\colon [0,L] \to (0,\infty)$ be a smooth function 
such that $h(0)=h(L)=\delta$. The curve $h$ is assumed to be 
twice differentiable in the interval $(0,L)$ with well defined
continuous extensions for itself and its derivatives to $\{0\}$ 
and $\{L\}$, \emph{i.e.} 
$h\in C^2\left([0,L];\left(0,\infty\right)\right)$, 
and to have only one critical point which is a maximum. 
Moreover, suppose $L>0$. Then, the domain is denoted as:
\begin{eqnarray*}
    \boldsymbol{\Omega_h} \colon = \{(x,y)\in \mathbb{R}^2 \hspace{4pt} | \hspace{4pt} 0<x<L \hspace{4pt} \& \hspace{4pt} 0<y<h(x)\}.
\end{eqnarray*}
\end{definition}

The domain's topological boundary, $\partial \boldsymbol{\Omega_h}$,
is drawn by the union of the segments:
$\Gamma_0 = \left\{\left(x,0\right) \in \mathbb{R}^2 \hspace{4pt};\hspace{4pt} 0\leq x\leq L\right\}$,
$\Lambda_0 = \{\left(0,y\right)\in \mathbb{R}^2;$ 
$0\leq y \leq \delta \}$,
$\Lambda_L = \left\{\left(L,y\right) \in \mathbb{R}^2\hspace{4pt};\hspace{4pt}0\leq y \leq \delta \right\}$,
and the curve
\[\Gamma_h = \left\{\left(x,h(x) \right)\in \mathbb{R}^2
\hspace{4pt}; \hspace{4pt}
0\leq x\leq L \right\}.\]

\begin{remark}
The vector $\mathbf{-e_1}=(-1,0) \in \mathbb{R}^2$ depicts 
the wind's direction above the Boundary Layer $\boldsymbol{\Omega_h}$. 
Likewise, $\mathbf{e_3}=(0,1) \in \mathbb{R}^2$ portrays the
direction from the Earth's surface to its atmosphere.
Similarly, the length $L>0$ is a fixed real number which 
represents the distance covered by the \emph{free-stream} in
direction $\mathbf{-e_1}=(-1,0)$ over $\Gamma_h$.
On the other hand, continuation of trajectories in the 
Boundary Layer is broken if for some $x\in [0,L]$ 
there is a pre\-ssure drop that generates a lift, a separation 
of the volume of the air from the surface. At that moment, the
phenomenon's description in terms of a fixed domain 
is no longer possible.
\end{remark}

\subsection{Dorodnitzyn's Model Equations}\label{subsec:1.4}

\begin{definition}\label{defi1.2}
Let $\boldsymbol{\Omega_h}$ be as in Definition~\ref{defi1.1}, 
$\rho \in L^1\left( \boldsymbol{\Omega_h};
\left(0,\infty\right)\right)$
be the \emph{density}; the \emph{velocity},
$\mathbf{v}=(u,v)\in L^2\left(\boldsymbol{\Omega_h};\mathbb{R}^2\right)
\cap L^1_{loc}\left(\boldsymbol{\Omega_h};\mathbb{R}^2\right)$;
the \emph{absolute temperature},
$T\in L^1_{loc}\left(\boldsymbol{\Omega_h}; \left(0,\infty \right)\right)$;
the \emph{pressure},
$p\in L^1_{loc}\left(\boldsymbol{\Omega_h}\right)$;
the \emph{dynamic viscosity}, 
$\mu \in L^1_{loc}\left(\boldsymbol{\Omega_h}\right)$;
and the \emph{thermal conductivity} be 
$\kappa \in L^1_{loc}\left(\boldsymbol{\Omega_h}\right)$;
all with well defined first order weak partial derivatives,
locally integrable in the Lebesgue sense, \emph{i.e.} in
$L^1_{loc}\left(\boldsymbol{\Omega_h}\right)$.
\end{definition}

Dorodnitzyn's model is formed by seven equations given for 
the seven variables
$\rho$, $u$, $v$, $T$, $p$, $\kappa$, $\mu$,
described above. The first three come from the conservation laws 
of Newtonian fluids: the stationary Conservation of Mass Law 
F. Boyer et al. \cite{BoyerFabrie2013}, Eq. (\ref{eq:1.1}), 
the compressible Boundary Layer Conservation of Momentum Law 
A. Dorodnitzyn \cite{Dorod42}, Eq. (\ref{eq:1.2}), and the
simplified Conservation of Ener\-gy per Unit Mass Law, 
Eq. (\ref{eq:1.9}), that is obtained in Proposition~\ref{prop1.1}
from an application of Luigi Crocco's \cite{Crocco32} procedure 
to the, statio\-nary and approximated, Conservation of Energy Law
stated below as Eq. (\ref{eq:1.3}).

Consider:
\noindent
  \begin{align}
  \frac{\partial \hspace{2pt}\left(\rho \hspace{2pt} u\right)}{\partial x}
  +
  \frac{\partial \hspace{2pt}\left(\rho \hspace{2pt} v\right)}{\partial y}
  & \hspace{2pt} = \hspace{2pt}
  0 \hspace{2pt}; \label{eq:1.1}\\
  \rho \left( u \hspace{2pt} \frac{\partial u}{\partial x}+
  v \hspace{2pt} \frac{\partial u}{\partial y}
  \right)
  & \hspace{2pt} =  \hspace{2pt}
  - \hspace{2pt} \frac{\partial p}{\partial x}
  +
  \frac{\partial}{\partial y}
  \left( \mu \hspace{2pt}
  \frac{\partial u}{\partial y}
  \right); 
  \hspace{7pt}\mbox{and} \label{eq:1.2}\\
  \rho \hspace{2pt}  
\left[ u \hspace{2pt}
\frac{\partial \hspace{2pt}\left(c_p \hspace{2pt} T\right)}{\partial x} 
+v \hspace{2pt}
\frac{\partial \hspace{2pt}\left(c_p\hspace{2pt} T\right)}{\partial y}\right]
& \hspace{2pt} = \hspace{2pt}
\frac{\partial}{\partial y} \left[\kappa \hspace{2pt} \frac{\partial T}{\partial y}\right]
+
\mu \hspace{2pt}
\left(
\frac{\partial u}{\partial y}
\right)^2
+
\frac{\partial p}{\partial t},
\label{eq:1.3}
\end{align}
where $c_p$ is the \emph{specific heat transfer coefficient 
at constant pressure}.

The next four are Ideal Gases properties and empirical laws.
In general, the dynamic viscosity $\mu$ satisfies the
proportionality relation
\begin{eqnarray*}
    Pr&=&\frac{c_p \hspace{2pt} \mu}{\kappa}  
\end{eqnarray*}
\noindent
for a \emph{thermal conductivity} $\kappa$
and a Prandtl number $Pr$. In this case,
assume $Pr=1$. This is:
\begin{equation}\label{eq:1.4}
   1= \frac{c_p \hspace{2pt} \mu}{\kappa};
\end{equation}
\noindent
and, the \emph{Equation of State} K. Saha \cite[p. 24]{Saha}, 
\begin{equation}\label{eq:1.5}
    p\hspace{2pt}V 
    \hspace{2pt}=\hspace{2pt}
    n\hspace{2pt }R^* \hspace{2pt} T; 
\end{equation}
\noindent
where $R^*$ is the \emph{Universal Gas Constant}, 
$n$ is the number of moles in a volume $V$, and
$V=V\left(B_r\right)=\int \!\!\! \int \!\!\! \int_{B_r}d\mathbf{x}$ where
\[B_r=\{\mathbf{x}=(x,y,z)\in \mathbb{R}^3 \hspace{4pt};\hspace{4pt} \Vert\mathbf{x}-\mathbf{\hat{x}}\Vert <r\}\subset \mathbb{R}^3,\]
\noindent
for a given point $\mathbf{\hat{x}} \in \boldsymbol{\Omega_h} \cap  \mathbb{R}^3$
and a value $r>0$ such that $\boldsymbol{\Omega_h} \subset B_r$.
This last equation is also used by Dorodnitzyn in the form:
\begin{eqnarray}\label{eq:1.6}
\rho &=& \frac{p}{RT},  
\end{eqnarray}
for $R=R^*/M$, where 
$M$ is the \emph{molecular weight} of the gas. 

The \emph{adiabatic polytropic atmosphere} 
O. G. Tietjens \cite[p. 35]{Tiet} is a relation:
\begin{equation}\label{eq:1.7}
       p\hspace{2pt} V^{b}  
       \hspace{2pt}= \hspace{2pt}
       c; 
\end{equation}
where $b\cong 1.405$, $c$ are fixed constants, and $V$ has 
the value des\-cribed above.

Finally, given two values $\mu_0$ and $T_0$ of $\mu$ and $T$
at the same point $(x_0,y_0) \in \boldsymbol{\Omega_h}$,
there is a \emph{Power-Law}  A. J. Smits et al.
\cite[p. 46]{SmitsDussauge2006}:
\begin{equation} \label{eq:1.8}
 \frac{\mu}{\mu_0}  
 \hspace{2pt} = \hspace{2pt} 
  \left( \frac{T}{T_0}\right)^{\frac{19}{25}}.
\end{equation}

\begin{remark}
First of all, when the air flow moves over a plane surface,
has a velocity lower than the velocity of sound, and the surface 
has a homogeneous temperature, the Prandtl number is equal to $1$ 
H. Schlichting et al. \cite[p. 215]{SchGers2009}, 
Eq. (\ref{eq:1.4}),
and $c_p\hspace{2pt}\mu$ replaces $\kappa$ in Eq. (\ref{eq:1.3}).
Second, a gas in the range of temperatures and densities found in 
the Earth's atmosphere fulfills the premises discovered for 
an Ideal Gas P.-L. Lions \cite[p. 8]{LionsVol1}, such as the
Equation of State,
Eq. (\ref{eq:1.5}). Moreover, when air moves in a convective parcel,
the process is fast enough to expect that there will not be a 
heat transfer between the gas within the convective draft
and its environment. Thus, adiabatic conditions imply another
association, known as an adiabatic polytropic 
atmosphere O. G. Tietjens \cite[p. 35]{Tiet}.
Addi\-tionally, in a  tempe\-rature range of $[150,500]$ Kelvin,
there is a Power-Law between dynamic viscosity 
and $T$ A. J. Smits et al. \cite[p. 46]{SmitsDussauge2006}.
\end{remark}
  
One can follow L. Crocco's \cite{Crocco32} procedure to find
a Conservation of Energy Law from which Dorodnitzyn's model 
equation Eq. (\ref{eq:1.9}) is deduced, and find that it is
equivalent to Eq. (\ref{eq:1.3}) when the Prandtl number is 
equal to $1$, Eq. (\ref{eq:1.4}), as it is outlined in the 
following paragraph.

\begin{proposition}\label{prop1.1}
Let $\rho, u,v,T,p,\kappa,\mu$ be as they were described in
Definition~\ref{defi1.2}. Then, they satisfy Eq. (\ref{eq:1.3})
if and only:
\begin{eqnarray}\label{eq:1.9}
     \rho \hspace{2pt}  
\left[ u \hspace{2pt}
\frac{\partial }{\partial x} 
+v \hspace{2pt}
\frac{\partial}{\partial y}\right]
\left(
c_p \hspace{2pt} T+\frac{u^2}{2}
\right)
\hspace{2pt}& = &\hspace{2pt}
\frac{\partial}{\partial y} \left[ \mu \hspace{2pt} 
\frac{\partial \hspace{2pt} }{\partial y}
\left(
c_p \hspace{2pt} T
+
\frac{u^2}{2}
\right)
\right].
\end{eqnarray}
\end{proposition}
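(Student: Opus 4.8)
The plan is to reproduce Crocco's classical device of fusing the momentum balance with the thermal energy balance, the only delicate bookkeeping being to track which terms annihilate once $Pr=1$. I treat $c_p$ as a constant (as in Definition~\ref{defi1.2}), and I take the Prandtl relation Eq.~(\ref{eq:1.4}) and the momentum law Eq.~(\ref{eq:1.2}) to hold alongside Eq.~(\ref{eq:1.3}), since the claimed equivalence is an assertion internal to the model. Substituting $\kappa=c_p\,\mu$ into the diffusion term and using $c_p=\mathrm{const}$ gives $\partial_y(\kappa\,\partial_y T)=\partial_y\!\left(\mu\,\partial_y(c_pT)\right)$, so the thermal balance already carries $c_pT$ in the form in which it must appear in Eq.~(\ref{eq:1.9}).

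First I would form the mechanical (kinetic) energy balance by multiplying Eq.~(\ref{eq:1.2}) by $u$. The convective side collapses by the chain rule, $\rho\,u\,(u\,\partial_x u+v\,\partial_y u)=\rho\,(u\,\partial_x(u^2/2)+v\,\partial_y(u^2/2))$, and the viscous side is recast through the Leibniz identity
\[
u\,\partial_y(\mu\,\partial_y u)=\partial_y\!\left(\mu\,\partial_y\tfrac{u^2}{2}\right)-\mu\,(\partial_y u)^2 ,
\]
which yields the mechanical balance
\[
\rho\left(u\,\partial_x\tfrac{u^2}{2}+v\,\partial_y\tfrac{u^2}{2}\right)=\partial_y\!\left(\mu\,\partial_y\tfrac{u^2}{2}\right)-\mu\,(\partial_y u)^2-u\,\partial_x p .
\]

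Next I would add this identity to the thermal balance Eq.~(\ref{eq:1.3}). By linearity the two left-hand sides combine into $\rho\,[u\,\partial_x+v\,\partial_y](c_pT+u^2/2)$, and on the right the two diffusion terms merge into $\partial_y\!\left(\mu\,\partial_y(c_pT+u^2/2)\right)$. The crucial point is that the viscous dissipation $+\mu(\partial_y u)^2$ carried by the thermal equation is exactly cancelled by the $-\mu(\partial_y u)^2$ produced by the Leibniz identity, while the advective pressure work cancels the $-u\,\partial_x p$ of the mechanical balance; what survives is precisely Eq.~(\ref{eq:1.9}). For the converse I would simply run the computation backwards: because Eq.~(\ref{eq:1.2}) is assumed, subtracting the mechanical balance from Eq.~(\ref{eq:1.9}) reconstitutes Eq.~(\ref{eq:1.3}), and since every step is an algebraic equivalence the two equations are interchangeable inside the model.

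The step I expect to be the real obstacle is not the algebra but its legitimacy under the stated regularity. The state variables lie only in $L^1_{loc}$ with first-order weak derivatives, so the products $\mu\,\partial_y u$ and $\mu(\partial_y u)^2$, and the chain-rule factor $u\,\partial_y u=\partial_y(u^2/2)$ together with the second-order term $\partial_y(\mu\,\partial_y u)$, are not a priori meaningful as functions, nor are the Leibniz and chain-rule manipulations automatically valid. The honest remedy is to read every manipulation distributionally and to invoke the minimal additional integrability that makes each product locally integrable and each Leibniz/chain-rule step an identity of distributions (for instance $\partial_y u\in L^2_{loc}$ together with $u$ locally essentially bounded, consistent with the $L^2$-bound and second-order weak differentiability assumed elsewhere in the paper); under those hypotheses the pairwise cancellations above are rigorous. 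I would also make the pressure bookkeeping explicit: in the stationary boundary-layer regime the relevant pressure contribution in the energy balance is the advective work $u\,\partial_x p$, and it is precisely this term, appearing identically on both sides, whose cancellation permits the total enthalpy $c_pT+u^2/2$ to satisfy the single diffusion law Eq.~(\ref{eq:1.9}).
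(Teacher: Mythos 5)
Your proposal is correct and follows essentially the same route as the paper's proof: substitute $\kappa=c_p\,\mu$ via Eq.~(\ref{eq:1.4}), multiply the momentum law Eq.~(\ref{eq:1.2}) by $u$, and add the two balances so that the viscous dissipation cancels through the Leibniz identity and the pressure terms cancel through the identification $\partial p/\partial t = u\,\partial p/\partial x$. Your explicit treatment of the converse and your remarks on the distributional legitimacy of the manipulations go slightly beyond what the paper records, but the underlying argument is the same.
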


\begin{proof}
First, Eq. (\ref{eq:1.4}) allows to make the substitution
$\kappa=c_p \hspace{2pt}\mu$
in the right side or Eq. (\ref{eq:1.3}). This way one can arrive at:
\begin{equation}\label{eq:1.10}   \rho \hspace{2pt}  
\left[ u \hspace{2pt}
\frac{\partial \hspace{2pt}\left(c_p \hspace{2pt} T\right)}{\partial x} 
+v \hspace{2pt}
\frac{\partial \hspace{2pt}\left(c_p\hspace{2pt} T\right)}{\partial y}\right]
\hspace{2pt} = \hspace{2pt}
\frac{\partial}{\partial y} \left[ \mu \hspace{2pt} 
\frac{\partial \hspace{2pt} \left(c_p \hspace{2pt} T\right)}{\partial y}\right]
+
\mu \hspace{2pt}
\left(
\frac{\partial u}{\partial y}
\right)^2
+
\frac{\partial p}{\partial t}.  
\end{equation}
Also, the product of the first velocity coordinate $u$
and Eq. (\ref{eq:1.2}) gives:
\begin{equation}\label{eq:1.11}
    \rho \left[ 
    u \hspace{2pt} \frac{\partial }{\partial x}
   \left( \frac{u^2}{2} \right)
   +
  v \hspace{2pt} \frac{\partial}{\partial y}
  \left( \frac{u^2}{2} \right)
  \right]
 \hspace{2pt} =  \hspace{2pt}
u \hspace{2pt}
  \frac{\partial}{\partial y}
  \left( \mu \hspace{2pt}
  \frac{\partial u}{\partial y}
  \right)
    - u \hspace{2pt} \frac{\partial p}{\partial x}.
\end{equation}
Finally, Eq. (\ref{eq:1.9}) is obtained from the additon of 
Eq. (\ref{eq:1.10}) and (\ref{eq:1.11}) because 
$\partial p/\partial t 
\hspace{2pt}= \hspace{2pt}
(\partial p/\partial x)
\hspace{2pt}
(\partial x/\partial t)
\hspace{2pt}= \hspace{2pt}
u \hspace{2pt}
(\partial p/\partial x)
$.
\end{proof}

\begin{remark}
It is possible to notice in Eq. (\ref{eq:1.9}) that in Dorodnitzyn's
model, the \emph{kinetic energy} generated by the velocity
coordinate $v$ in the orthogonal direction to the surface is taken
as negligible; the \emph{total ener\-gy per unit mass}, $E=c_pT+u^2/2$,
is considered the addition of the 
\emph{kinetic energy per unit mass} $u^2/2$
and the \emph{internal energy} in terms of \emph{specific enthalpy} $e=c_pT$.
\end{remark}

\subsection{Dorodnitzyn's Model Boundary Conditions}
\label{subsec:1.5}

The velocity at the upper top $\Gamma_h$ of 
$\boldsymbol{ \partial \Omega_h}$
is called the \emph{free-stream velocity}. Let:
\begin{eqnarray}\label{eq:1.12}
\mathbf{v}|_{\Gamma_h}&=&(-U,0),
\end{eqnarray}
\noindent
for a strictly positive constant real value $U>0$. Also, 
the velocity value at the lower lid $\Gamma_0$ is:
\begin{eqnarray}\label{eq:1.13}
\mathbf{v}|_{\Gamma_0}&=&(0,0).
\end{eqnarray}

Similarly, a constant \emph{free-stream temperature}, 
\begin{eqnarray}\label{eq:1.14}
T|_{\Gamma_h}&=&T_h>0,
\end{eqnarray}
\noindent
and a homogeneous \emph{free-stream dynamic viscosity} value
\begin{eqnarray}\label{eq:1.15}
\mu|_{\Gamma_h}&=&\mu_h>0,
\end{eqnarray}
\noindent
are given in $\Gamma_h$.

Furthermore, there are periodic velocity conditions at the 
vertical segments of the boundary, $\Lambda_0$ and $\Lambda_L$,
described in the Definition~\ref{defi1.1}. This is: 
For all $y \in (0,\delta)$,
 \begin{eqnarray}\label{eq:1.16}
 \left(u\left(0,y\right),0\right)
 & = & \left(u\left(L,y\right),0\right).
\end{eqnarray}
\noindent
Finally, we have a Neumann condition for $T$: For all $x \in [0,L]$,
 \begin{eqnarray}\label{eq:1.17}
\frac{\partial T}{\partial y}(x,0)
 & = & 0.
\end{eqnarray}

\begin{remark}
This last condition represents an adiabatic wall in the surface
$\Gamma_0$. If the wind's velocity is less than the velocity of
sound, the gas adheres to the solid surface 
T. von K\'{a}rm\'{a}n et al. \cite{VKar1938} and
A. J. Smits et al. \cite[p. 52]{SmitsDussauge2006}. 
This is called the \emph{no slip} condition,
as seen in Eq. (\ref{eq:1.13}). On the other hand, there is a
logarithmic wind velocity profile
on the Earth's troposphere that depends on the type of atmosphere,
and is not valid close to the Earth's surface, but provides a
boundary condition $U$ at the upper top $\Gamma_h$ of
$\boldsymbol{\Omega_h}$. For example, the classical Fleagle 
and Businger's \cite[p. 274]{Fleagle} Atmospheric Physics book 
reports a horizontal velocity measurement of $4$ $m/s$ at a height
of $0.4$ $m$, $\mathbf{v}(0,0.4)=(4,0)$, in an \emph{unstable}
atmosphere at O'Neill, Nebraska on $19$ August $1953$. Moreover,
this value and the free-stream temperature determine that of the
surface temperature, as will be shown in the following
Lemma~\ref{lem2.1}, Eq. (\ref{eq:2.20}). Similarly, the pressure
$p|_{\Gamma_0}=p_0$ can be known from $U$ and $T_h$ through 
Eq. (\ref{eq:1.5}) and (\ref{eq:2.20}). Once the density
is expressed in terms of the velocity $u$, as in Lemma~\ref{lem2.2},
$\rho|_{\Gamma_0}=\rho_0$ can be calculated. Finally, 
Eq. (\ref{eq:1.4}) provides a way to obtain $\mu_h$ from a 
surface value of $\kappa_h$ given by the material.
\end{remark}

\section{Limit Formula}
\label{sec:2}

\subsection{Adimensional Model}
\label{subsec:2.1}

\begin{lemma}\label{lem2.1}
Let $\rho, u,v,T,p,\kappa,\mu$ be as in Definition~\ref{defi1.2}.
If the no slip condition (\ref{eq:1.13}) is satisfied, then 
Eq. (\ref{eq:1.9}) has a constant solution $E=c_p\hspace{2pt}T_h+U^2/2$
in the domain $\boldsymbol{\Omega_h}$, described in Definition~\ref{defi1.1}, that fulfills the remaining 
boundary conditions (\ref{eq:1.12}), (\ref{eq:1.14}), 
and (\ref{eq:1.17}) given for $u$, $v$, and $T$.
\end{lemma}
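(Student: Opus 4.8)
The plan is to verify directly that the constant function $E\equiv c_p\,T_h+U^2/2$ solves Eq.~(\ref{eq:1.9}) and then to check, one at a time, that it is compatible with the prescribed boundary data. Writing $E=c_pT+u^2/2$, Eq.~(\ref{eq:1.9}) is a stationary transport--diffusion equation for $E$ alone; if $E$ is constant then $\partial E/\partial x=\partial E/\partial y=0$, so the convective left-hand side vanishes identically, and since the diffusive flux $\mu\,(\partial E/\partial y)$ is then also zero, its $y$-derivative on the right-hand side vanishes too. Hence every constant solves Eq.~(\ref{eq:1.9}) regardless of the profiles of $\rho$ and $\mu$.

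Next I would fix the value of the constant from the \emph{free-stream} data on $\Gamma_h$. The velocity condition (\ref{eq:1.12}) gives $u=-U$ there, so $u^2/2=U^2/2$, while the temperature condition (\ref{eq:1.14}) gives $T=T_h$; evaluating $E=c_pT+u^2/2$ on $\Gamma_h$ therefore yields $E|_{\Gamma_h}=c_pT_h+U^2/2$. As $E$ is constant, this is its value on all of $\boldsymbol{\Omega_h}$, which is precisely the asserted expression.

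It then remains to reconcile this constant with the conditions at the surface $\Gamma_0$. By the no slip condition (\ref{eq:1.13}) one has $u(x,0)=0$, so $E|_{\Gamma_0}=c_p\,T(x,0)$; equating this to the constant determines the surface temperature $T_0=T_h+U^2/(2c_p)$, the relation recorded in the preceding Remark as Eq.~(2.20), through which $U$ and $T_h$ fix $T_0$. Finally, differentiating the identity $c_pT+u^2/2\equiv\mathrm{const}$ in $y$ gives $c_p\,(\partial T/\partial y)+u\,(\partial u/\partial y)=0$; restricting to $y=0$ and using $u(x,0)=0$ kills the second term and leaves $c_p\,(\partial T/\partial y)(x,0)=0$, i.e.\ the adiabatic Neumann condition (\ref{eq:1.17}).

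There is no genuine analytic obstacle here --- the lemma claims existence of \emph{a} constant solution consistent with the data, not uniqueness --- and the only real content lies in the last step: the Neumann condition (\ref{eq:1.17}) is not an independent constraint but is forced by the constant-energy ansatz together with no slip, precisely because the term $u\,\partial u/\partial y$ vanishes on $\Gamma_0$. The point to state carefully is that we are exhibiting one particular solution compatible with (\ref{eq:1.12}), (\ref{eq:1.13}), (\ref{eq:1.14}) and (\ref{eq:1.17}); the remaining data, namely the periodicity (\ref{eq:1.16}) and the viscosity value (\ref{eq:1.15}) on $\Gamma_h$, impose nothing on this constant solution.
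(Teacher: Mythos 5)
Your proof is correct and takes essentially the same route as the paper's: direct substitution of the constant $E$ into Eq.~(\ref{eq:1.9}) (both sides vanish identically), fixing the constant's value from the free-stream data on $\Gamma_h$, and then deducing the surface temperature and the adiabatic condition at $\Gamma_0$ from the no slip condition. If anything you are more careful than the paper: your explicit computation $c_p\,(\partial T/\partial y)+u\,(\partial u/\partial y)=0$ restricted to $y=0$ is the justification left implicit behind the paper's terse claim that (\ref{eq:1.17}) ``is fulfilled'', and your relation $T_0=T_h+U^2/(2c_p)$ is the corrected form of the paper's Eq.~(\ref{eq:2.20}), which as printed, $T|_{\Gamma_0}=T_h+\left(1-\frac{U^2}{2c_p}\right)$, appears to contain a typographical error.
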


\begin{proof}
It is enough to substitute the constant value $E=c_p\hspace{2pt}T_h+U^2/2$
in Eq. (\ref{eq:1.9}) to see that both sides become zero.
Because $E=c_p\hspace{2pt}T+u^2/2$, this allows us to express 
the absolute temperature in the form 
\begin{equation}\label{eq:2.19}
   T(u)=T_h+\frac{1}{2c_p}\left(U^2-u^2\right).
\end{equation}
The boundary conditions (\ref{eq:1.12}) and (\ref{eq:1.14})
are verified by construction. If $y=0$, Eq. (\ref{eq:2.19}) 
and the no slip condition (\ref{eq:1.13}) imply that:
\begin{equation}\label{eq:2.20}
    T|_{\Gamma_0}=T_h+\left(1-\frac{U^2}{2c_p}\right).
\end{equation}
Thus, $T|_{\Gamma_0}=T_0>0$, and the boundary condition (\ref{eq:1.17}) is fulfilled.
\end{proof}

\begin{corollary}\label{cor2.1}
Under the same assumptions, where the free-stream temperature
$T_h>0$, as is stated in (\ref{eq:1.14}), $T$ can be seen in
terms of $T_0$ as:
\begin{eqnarray}\label{eq:2.21}
    T(u)&=&T_0\hspace{2pt}
    \left(1-\frac{u^2}{2c_p\hspace{2pt}T_0}\right).
\end{eqnarray}
\end{corollary}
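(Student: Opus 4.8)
The plan is to derive (\ref{eq:2.21}) directly from the previous results by a short algebraic rearrangement, treating $T_0$ as the reference point instead of $T_h$. First I would take the defining relation (\ref{eq:2.20}) from Lemma~\ref{lem2.1}, namely
\begin{eqnarray*}
T_0 &=& T_h+\left(1-\frac{U^2}{2c_p}\right),
\end{eqnarray*}
and solve it to express $T_h$ in terms of the surface value $T_0$. The idea is that the constant total energy $E=c_p\hspace{2pt}T_h+U^2/2$ can equally well be written using $T_0$, so that $c_p\hspace{2pt}T+u^2/2$ equals a fixed constant that I can re-anchor at the surface where $u=0$.

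Next I would return to the temperature-velocity relation (\ref{eq:2.19}),
\begin{eqnarray*}
T(u) &=& T_h+\frac{1}{2c_p}\left(U^2-u^2\right),
\end{eqnarray*}
and substitute the expression for $T_h$ obtained above. The goal is to eliminate both $T_h$ and $U$ in favour of $T_0$, collecting the constant terms so that the $U^2/(2c_p)$ contribution cancels against the one hidden inside $T_h$. After this cancellation I expect the remaining dependence on $u$ to appear only through the single term $-u^2/(2c_p)$, which I would then factor as $T_0\left(1-u^2/(2c_p\hspace{2pt}T_0)\right)$ to match the claimed form (\ref{eq:2.21}).

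The main thing to watch is the bookkeeping of the additive constant in (\ref{eq:2.20}): as written it contains a bare $1$ (rather than, say, $T_h$ times a factor), so the algebra only collapses cleanly if that constant is exactly what is needed to absorb the $U^2/(2c_p)$ piece from (\ref{eq:2.19}). I would verify the cancellation explicitly by substituting $u=0$ as a sanity check, confirming that the right-hand side of (\ref{eq:2.21}) returns $T_0$, and by substituting $u=U$ to confirm it returns $T_h$; if both boundary values are reproduced and the expression is affine in $u^2$, then by the structure of (\ref{eq:2.19}) it must coincide with $T(u)$ everywhere. This is the only potential obstacle, and it is purely a matter of matching constants rather than any genuine difficulty, so the corollary follows immediately from Lemma~\ref{lem2.1}.
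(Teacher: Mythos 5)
Your proposal is correct and, at bottom, it is the same argument as the paper's: everything rests on the constancy of the total energy $E$ from Lemma~\ref{lem2.1}, anchored at the surface through the no slip condition. The only difference is routing. The paper never touches Eq. (\ref{eq:2.20}) in this proof: it evaluates $E=c_p\hspace{2pt}T+u^2/2$ at $\Gamma_0$ where $u=0$ to get $E=c_p\hspace{2pt}T_0$, and then reads off $c_p\hspace{2pt}T+u^2/2=c_p\hspace{2pt}T_0$, which is Eq. (\ref{eq:2.21}). Your route instead eliminates $T_h$ between Eq. (\ref{eq:2.19}) and Eq. (\ref{eq:2.20}), and your worry about the bookkeeping is well founded: the bare $1$ in Eq. (\ref{eq:2.20}) is a misprint (it is dimensionally inconsistent, and contradicts Eq. (\ref{eq:2.19}) at $u=0$); the relation actually implied by Lemma~\ref{lem2.1} is $T_0=T_h+U^2/(2c_p)$, and with that constant your cancellation goes through exactly as you describe, with your checks at $u=0$ and $u=U$ (together with affineness in $u^2$ and $U>0$) confirming the identity. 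So your proof is valid once the corrected constant is used; the paper's formulation is the cleaner one precisely because it bypasses the misprinted intermediate equation.
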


\begin{proof}
The previous Lemma~\ref{lem2.1} shows that the total energy $E$ 
has a constant value throughout the domain. We can use the no slip
condition (\ref{eq:1.13}) in the expression
$E=c_p\hspace{2pt}T+u^2/2$ to obtain a new way to calculate it as 
$E=c_p\hspace{2pt}T_0$. Hence, $c_p\hspace{2pt}T_0=c_p\hspace{2pt}T+u^2/2$, 
and we get Eq. (\ref{eq:2.21}).
\end{proof}

\begin{remark}\label{rem2.1}
By Definition~\ref{defi1.2}, the absolute temperature $T>0$ 
in the domain $\boldsymbol{\Omega_h}$ as described in 
Definition~\ref{defi1.1}. Additionally $c_p>0$. In consequence,
the total energy per mass unit $i_0\colon =c_pT_0=c_pT+u^2/2$ is
strictly bigger than the kinetic energy $u^2/2$ gene\-rated by 
the first velocity's component. Therefore the difference
   $1-\left( 
      u^2/2i_0\right) \not=0$
in $\boldsymbol{\Omega_h}$.
\end{remark}

\begin{lemma}\label{lem2.2}
Once again, let $\rho, u,v,T,p,\kappa,\mu$ be as in
Definition~\ref{defi1.2}. Suppose that Eq. (\ref{eq:1.3}),
(\ref{eq:1.4}), (\ref{eq:1.5}), (\ref{eq:1.7}) and (\ref{eq:1.8})
are satisfied by $\rho$, $u$, $v$, $T$, $p$, $\kappa$, and $\mu$ 
in $\boldsymbol{\Omega_h}$ with the boundary conditions
(\ref{eq:1.12}), (\ref{eq:1.13}), (\ref{eq:1.14}), 
and (\ref{eq:1.15}). Then:
\noindent
\begin{align}
  p(u)
& \hspace{2pt} = \hspace{2pt}
     c_1
      \hspace{2pt}
    \left[1-\left(u^2/2i_o\right)\right]^{\frac{b}{(b-1)}}; \label{eq:2.22}\\
\rho(u)
& \hspace{2pt} =  \hspace{2pt}
     c_2
     \hspace{2pt}
      \frac{\left[1-\left(u^2/2i_0\right)\right]^{\frac{b}{(b-1)}}}
      {\left[1-\left(u^2/2i_0\right)\right]}; 
 \hspace{7pt}\mbox{and} \label{eq:2.23}\\      
\mu(u)
& \hspace{2pt} = \hspace{2pt}
     c_3
     \hspace{2pt}
     \left[1-\left(u^2/2i_0\right)\right]^{\frac{19}{25}};
\label{eq:2.24}
\end{align}
\noindent
where
$c_1= p_0 \hspace{2pt} T_0^{\frac{2b}{(b-1)}}$,
$c_2=c_1 \hspace{2pt}R^{-1} \hspace{2pt}T_0^{-1}$,
and
$c_3=\mu_h \hspace{2pt} T_h^{-\frac{19}{25}}\hspace{2pt} T_0^{\frac{19}{25}}$.
\end{lemma}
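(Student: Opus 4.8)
The plan is to read off each of the three quantities as a power of the absolute temperature from the appropriate constitutive law, then substitute the temperature profile already secured in Corollary~\ref{cor2.1} and fix the multiplicative constants from the boundary data. Throughout, write $\phi := 1-\left(u^2/2i_0\right)$, so that Corollary~\ref{cor2.1} reads $T=T_0\,\phi$, and recall from Remark~\ref{rem2.1} that $\phi\neq 0$ on $\boldsymbol{\Omega_h}$, so every fractional power appearing below is well defined.

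First I would establish the viscosity formula (\ref{eq:2.24}). Taking the reference point in the Power-Law (\ref{eq:1.8}) on $\Gamma_h$, where the boundary conditions (\ref{eq:1.14}) and (\ref{eq:1.15}) give $T=T_h$ and $\mu=\mu_h$, yields $\mu=\mu_h\left(T/T_h\right)^{19/25}$. Substituting $T=T_0\,\phi$ and collecting the constant $c_3=\mu_h\,T_h^{-19/25}\,T_0^{19/25}$ gives exactly (\ref{eq:2.24}).

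Next I would obtain the pressure formula (\ref{eq:2.22}) by eliminating the volume between the Equation of State (\ref{eq:1.5}) and the adiabatic polytropic relation (\ref{eq:1.7}). Solving (\ref{eq:1.5}) for $V$ and inserting the result into (\ref{eq:1.7}) removes $V$ and leaves a relation of the form $p^{\,1-b}\,T^{\,b}=\text{const}$, i.e. $p$ is a constant multiple of $T^{\,b/(b-1)}$. The multiplicative constant is pinned down on $\Gamma_0$, where the no-slip condition (\ref{eq:1.13}) forces $u=0$, hence $T=T_0$ and $p=p_0$; this gives $p=p_0\left(T/T_0\right)^{b/(b-1)}$, and substituting $T=T_0\,\phi$ produces (\ref{eq:2.22}) once the constants are absorbed into $c_1$. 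Finally, the density formula (\ref{eq:2.23}) follows at once from the ideal-gas relation (\ref{eq:1.6}): inserting the pressure (\ref{eq:2.22}) and $T=T_0\,\phi$ into $\rho=p/(RT)$ gives $\rho=c_1\,\phi^{b/(b-1)}/(R\,T_0\,\phi)$, which is (\ref{eq:2.23}) with $c_2=c_1\,R^{-1}\,T_0^{-1}$.

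The step I expect to be the main obstacle is the elimination of $V$ in the pressure calculation, precisely because $V$ is introduced in the text surrounding Definition~\ref{defi1.2} as the Lebesgue measure of a fixed ball $B_r\supset\boldsymbol{\Omega_h}$, whereas (\ref{eq:1.5}) and (\ref{eq:1.7}) are being used here as pointwise constitutive laws. The relations must therefore be read in their intensive form, with $V$ the specific (or molar) volume, equivalently $V\propto 1/\rho$ through (\ref{eq:1.6}); only then does the elimination yield the local power law $p\propto T^{b/(b-1)}$ rather than a contradiction. The remaining work---carrying $T=T_0\,\phi$ through and checking that the accumulated constants collapse to the stated $c_1$, $c_2$, $c_3$---is bookkeeping.
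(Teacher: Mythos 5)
Your proposal is correct and takes essentially the same route as the paper's proof: anchor the power law (\ref{eq:1.8}) at $\Gamma_h$ to get (\ref{eq:2.24}), eliminate $V$ between (\ref{eq:1.5}) and (\ref{eq:1.7}) and pin the constant at $\Gamma_0$ via no-slip to get (\ref{eq:2.22}), then apply $\rho=p/(RT)$ for (\ref{eq:2.23}); your ``intensive'' reading of $V$ is precisely what the paper does implicitly when it substitutes $V=nR^*T/p$ into the polytropic relation. The only clause that does not hold up is that the constants ``collapse to the stated $c_1$'': your (correct) algebra gives $p=p_0\left(T/T_0\right)^{b/(b-1)}$, hence $c_1=p_0$, whereas the lemma asserts $c_1=p_0\,T_0^{2b/(b-1)}$ --- a discrepancy that originates in the paper's own intermediate step $p=p_0\,T_0^{2b/(b-1)}\,T^{b/(b-1)}$, which does not follow from $p_0^{\,1-b}T_0^{\,b}=p^{\,1-b}T^{\,b}$, so your derivation in fact yields the dimensionally consistent constant, and $c_3$ together with the relation $c_2=c_1R^{-1}T_0^{-1}$ come out exactly as stated.
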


\begin{proof}
As previously seen in Proposition~\ref{prop1.1}, Eq. (\ref{eq:1.3}) 
and (\ref{eq:1.4}) are equivalent to Eq. (\ref{eq:1.9}). If the
boundary conditions (\ref{eq:1.12}) and (\ref{eq:1.13}) are known, 
the Lemma~\ref{lem2.1} gives $T_0 >0$ at $\Gamma_0$,
Eq. (\ref{eq:2.20}), and the expression of temperature in terms of
$u$, Eq. (\ref{eq:2.21}). Then, Eq. (\ref{eq:1.5}) provides a value
$p|_{\Gamma_0}=p_0=(n\hspace{2pt}R^*\hspace{2pt}T_0)/V>0$.
Analogously, regarding Eq. (\ref{eq:1.5}), (\ref{eq:2.21}),
and the last Remark~\ref{rem2.1}, we have that $p\not =0$ in
$\boldsymbol{\Omega_h}$. Thus, from Eq. (\ref{eq:1.7}), we get
$p_0\hspace{2pt}\left[\left(n\hspace{2pt}R^*\hspace{2pt}T_0\right)/p_0\right]^b=
p\hspace{2pt}\left[\left(n\hspace{2pt}R^*\hspace{2pt}T\right)/p\right]^b$.
This is, $p=p_0\hspace{2pt}T_0^{2b/(b-1)}\hspace{2pt}T^{b/(b-1)}$.
The substitution of Eq. (\ref{eq:2.21}) in this last expression 
conduces to (\ref{eq:2.22}). Similarly, Eq. (\ref{eq:2.22}),  
Eq. (\ref{eq:2.21}), and Eq. (\ref{eq:1.6}), which is equivalent 
to Eq. (\ref{eq:1.5}), conduces to (\ref{eq:2.23}). 
Finally, Eq. (\ref{eq:2.24}) is a consequence of Eq. (\ref{eq:1.8}),
Eq. (\ref{eq:2.21}), and the value $\mu_h$ of (\ref{eq:1.15}).
\end{proof}

\begin{remark}
Atmospheric pressure is regarded as the weight impressed by the
column of air over a point $x$ at its base 
O. G. Tietjens \cite[p. 18]{Tiet}.
Dorodnitzyn assumes $p$ to be dependent only of $x$, and that for
each  $x \in (0,L)$, $p(x,y)$ is given by its corresponding value
$p\left(x,h\left(x\right)\right)$ at $\Gamma_h$. In the
Coro\-llary~\ref{cor2.2}, we emphasize that this can be seen as a
consequence of tempera\-ture's observed linear decrease with height 
from the Earth's surface to the troposphere's 
upper border K. Saha \cite[p. 20]{Saha}. Moreover, this allows us to
consider a constant pressure value determined by the free-stream
velocity in the Theorem~\ref{theo:2.1} below.
\end{remark}

\begin{corollary}\label{cor2.2}
Under the same conditions as in Lemma~\ref{lem2.2}, let
$p(x,y)=g\int_{y}^{\infty}\rho (x,z) dz$ for all 
$(x,y)\in \boldsymbol{\Omega_h}$, where $g$ is the standard gravity
constant. If, additionally, $\beta>0$ is such that 
$T(x,y)=T_0-\beta y$ $\forall (x,y)\in 
\boldsymbol{\Omega_h}$, then for all 
$(x,y) \in \boldsymbol{\Omega_h}$:
\noindent
\begin{align}
p(x,y)
& \hspace{2pt} \cong \hspace{2pt}
c_1
\hspace{2pt}
    \left[1-\left(U^2/2i_0\right)\right]^{\frac{b}{(b-1)}};
 \hspace{7pt}\mbox{and} \label{eq:2.25}\\
\rho(x,y)
& \hspace{2pt} \cong \hspace{2pt}
      c_2
     \hspace{2pt}
      \frac{\left[1-\left(U^2/2i_0\right)\right]^{\frac{b}{(b-1)}}}
      {\left[ 1-\left( 
      u^2\left(x,y\right)/2i_0\right)\right]}.  
\label{eq:2.26}
\end{align}    
\end{corollary}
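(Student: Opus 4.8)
The plan is to observe that the right-hand side of (\ref{eq:2.25}) is nothing but the \emph{free-stream} value of the pressure, attained on $\Gamma_h$, and then to show that the hydrostatic hypothesis propagates this value, up to a negligible error, throughout the thin column. First I would express the state variables through the absolute temperature alone. By Corollary~\ref{cor2.1}, Eq.~(\ref{eq:2.21}), together with the definition $i_0:=c_pT_0$ of Remark~\ref{rem2.1}, one has $1-(u^2/2i_0)=T/T_0$, so that (\ref{eq:2.22}) and (\ref{eq:2.23}) read $p=c_1\,(T/T_0)^{b/(b-1)}$ and $\rho=c_2\,(T/T_0)^{1/(b-1)}$. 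Inserting the linear profile hypothesis $T(x,y)=T_0-\beta y$ gives the explicit, $x$-independent expressions $p(x,y)=c_1\,(1-\beta y/T_0)^{b/(b-1)}$ and $\rho(x,y)=c_2\,(1-\beta y/T_0)^{1/(b-1)}$; in particular $0<T\le T_0$ on $\boldsymbol{\Omega_h}$ forces $0<\rho\le c_2$, since $1/(b-1)>0$.

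Next I would fix the free-stream value. On $\Gamma_h$ the boundary condition (\ref{eq:1.12}) gives $u=-U$, so evaluating (\ref{eq:2.22}) there yields $p(x,h(x))=c_1\,[1-(U^2/2i_0)]^{b/(b-1)}$, precisely the right-hand side of (\ref{eq:2.25}). Differentiating the hydrostatic hypothesis $p(x,y)=g\int_y^\infty\rho(x,z)\,dz$ in $y$ gives $\partial p/\partial y=-g\rho$, whence integration from $y$ up to $h(x)$ produces
\[
  p(x,y)=c_1\,[1-(U^2/2i_0)]^{b/(b-1)}+g\int_y^{h(x)}\rho(x,z)\,dz.
\]
Using $0<\rho\le c_2$ and $h(x)-y\le\max_{[0,L]}h$, the correction is at most $g\,c_2\,\max_{[0,L]}h$, which is negligible against the free-stream pressure as soon as the layer thickness is small compared with the atmospheric scale height $R\,T_0/g$. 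This is exactly the boundary-layer regime, and (\ref{eq:2.25}) follows.

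The second estimate then costs nothing. Dividing (\ref{eq:2.25}) by $c_1$ gives $[1-(u^2/2i_0)]^{b/(b-1)}\cong[1-(U^2/2i_0)]^{b/(b-1)}$; substituting this approximation for the numerator of the \emph{exact} relation (\ref{eq:2.23}), while keeping its exact denominator $1-(u^2(x,y)/2i_0)$ intact, yields (\ref{eq:2.26}) at once.

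I expect the genuine difficulty to lie entirely in making the symbol $\cong$ precise. Everything reduces to the claim that the hydrostatic pressure drop across the column is negligible relative to the free-stream pressure, which demands a height scale for $\boldsymbol{\Omega_h}$ (the parameter $\delta$, or $\max_{[0,L]}h$) and its comparison with $R\,T_0/g$. A second, more delicate point is that the hypotheses are mildly inconsistent at the level of exact identities: the profile $T=T_0-\beta y$ cannot reach $y=+\infty$ without forcing $T<0$, and the constant $T_h$ cannot equal $T_0-\beta\,h(x)$ along a top whose height $h(x)$ varies with $x$. Both tensions are of size proportional to the layer thickness, so they are absorbed into the approximate equalities; it is this bookkeeping of small quantities, rather than any computation, that the statement encodes.
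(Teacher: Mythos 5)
Your proposal is correct, and its overall architecture is the same as the paper's: use hydrostatic balance across the thin layer to conclude that $p$ is approximately constant in height, identify that constant as the free-stream value by evaluating Eq.~(\ref{eq:2.22}) at $u=-U$ on $\Gamma_h$ (giving Eq.~(\ref{eq:2.25})), and then obtain Eq.~(\ref{eq:2.26}) by inserting this into the numerator of the exact relation Eq.~(\ref{eq:2.23}). The two arguments differ only in how the negligibility of the pressure variation is justified. The paper feeds the ideal-gas law $\rho=p/(RT)$ and the linear profile into the hydrostatic relation, integrates the resulting ODE $\partial p/\partial y=-gp/\left[R\left(T_0-\beta y\right)\right]$ to a barometric-type formula $\ln p(x,y)-\ln p_0=\mathrm{const}\cdot\ln\left(\left(T_0-\beta y\right)/T_0\right)$ (written there with constant $g\beta$, though dimensionally it should be $g/(R\beta)$), and then discards the logarithm when $\beta y$ is small. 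You instead extract the uniform bound $0<\rho\le c_2$ from the polytropic expressions and bound the finite piece $g\int_y^{h(x)}\rho\,dz\le g\,c_2\,\max_{[0,L]}h$ directly, comparing it against the scale height $R\,T_0/g$. Your route has two small advantages: the error is quantified explicitly rather than by the paper's looser ``if $y$ is sufficiently small'' argument, and, by splitting the hydrostatic integral at $h(x)$ so that $g\int_{h(x)}^{\infty}\rho\,dz$ stays bundled inside the boundary value $p\left(x,h(x)\right)$, you sidestep the fact that the profile $T=T_0-\beta y$ cannot be extended to $y=\infty$ --- a tension the paper's proof does not address. Both proofs, as you observe, ultimately leave the symbol $\cong$ informal.
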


\begin{proof}
From the Lemma~\ref{lem2.2}, we have $T_0>0$, Eq. (\ref{eq:2.22}) 
and  (\ref{eq:2.23}). If $T(x,y)=T_0-\beta y$ is substituted in 
Eq. (\ref{eq:1.16}), that is equi\-valent to the given 
Eq. (\ref{eq:1.15}), and the corresponding density expression 
is used in the atmospheric pressure's definition $p(x,y)=g\int_{y}^{\infty}\rho (x,z) dz$.
Then,
    $ln\left( p\left(x,y\right)\right)-ln\left( p_0\right) =
    g\beta \left[ ln\left((T_0-\beta y)/T_0\right)\right]$.
For this reason, if $y$ is sufficiently small for the term $\beta y$
to be discarded, the variation of pre\-ssure with height may be
negligible. Hence, $p$ can be approximated by its value in each
$(x,h(x))\in \Gamma_h$. The Eq. (\ref{eq:2.22}) with values in
$\Gamma_h$ implies Eq. (\ref{eq:2.25}). Furthermore, 
Eq. (\ref{eq:2.26}) is inferred from Eq. (\ref{eq:2.25})
and (\ref{eq:2.23}).
\end{proof}

\begin{lemma}\label{lem2.3}
Let $h$ and $\boldsymbol{\Omega_h}$ be as in
Definition~\ref{defi1.2}, and $\rho$, $u$, $v$,
$T$, $p$, $\kappa$, $\mu$ 
as in Definition~\ref{defi1.2}. For each $L>0$ and
$H\colon =\max \left\{h(x)\hspace{4pt}|\hspace{4pt}x\in [0,L]\right\}$, 
there are a parameter $\epsilon \colon =H/L>0$, and a diffeomorphism
   $\phi^{\epsilon} \colon \boldsymbol{\Omega_h} \to
   \boldsymbol{\Omega_{\epsilon}}$,
   $\phi^{\epsilon}(x,y)=\left(s,\tau)
   \colon =(x/L,y/(L\epsilon)\right)$ for all $(x,y)\in \boldsymbol{\Omega_h}$.
Also, there is a vector field
   $\mathbf{v}^{\epsilon}=(u^{\epsilon},v^{\epsilon}) \in
   L^2\left(\boldsymbol{\Omega_{\epsilon}};\mathbb{R}^2\right)
   \cap L^1_{loc}\left(\boldsymbol{\Omega_{\epsilon}};\mathbb{R}^2\right)$
such that
$\mathbf{v}^{\epsilon}(s,\tau)  = 
\left( u^{\epsilon}\left(s,\tau\right),
v^{\epsilon}\left(s,\tau\right)\right)$
with
$u^{\epsilon}\left(s,\tau\right)=\left(1/L\right)u\left(Ls,L\epsilon \tau\right)$,
$v^{\epsilon}\left(s,\tau\right)=
\left(1/\left(L\epsilon\right)\right)v\left(Ls,L\epsilon \tau \right)$;
a density
$\rho^{\epsilon} \in L^1\left(\boldsymbol{\Omega_{\epsilon}};
\left(0,\infty \right)\right)$,
$\rho^{\epsilon}(s,\tau):= c_2\left[\sigma_0\right]^{b/(b-1)}
\sigma^{-1}(s,\tau)$, where $\sigma$ denotes 
$\sigma(s,\tau)=
      1-\left( 
      \left[Lu^{\epsilon}\left(s,\tau \right)\right]^2/2i_0\right)$,
$\sigma_0$ is the number $1-\left( 
      \left[LU^{\epsilon}\right]^2/2i_0\right)$,
and $U^{\epsilon}=(1/L)\hspace{2pt}U$ is the free-stream velocity 
on the curve 
$h^{\epsilon} \in C^2 \left( [0,1]\right)$ such that 
$h^{\epsilon}(x) \colon =h\left(Ls\right)/(L\epsilon)$.
Analogously, there is a dynamic viscosity 
$\mu^{\epsilon} \in L^1_{loc}\left(\boldsymbol{\Omega_{\epsilon}}\right)$
with
$\mu^{\epsilon}\colon =c_3 \sigma^{\frac{19}{25}}$.
\end{lemma}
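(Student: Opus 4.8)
The plan is to read Lemma~\ref{lem2.3} as a construction-and-verification statement and to prove it in three movements: exhibit the scaling map and confirm it is a diffeomorphism onto $\boldsymbol{\Omega_{\epsilon}}$, transport the integrability of the original unknowns through that change of variables, and finally check that the nonlinear expressions defining $\rho^{\epsilon}$ and $\mu^{\epsilon}$ are well defined and land in the asserted spaces. First I would analyze $\phi^{\epsilon}$. Since $\phi^{\epsilon}(x,y)=(x/L,\,y/(L\epsilon))$ is the restriction of a linear diagonal map with constant nonzero Jacobian determinant $1/(L^{2}\epsilon)$, it is a smooth bijection whose inverse $(\phi^{\epsilon})^{-1}(s,\tau)=(Ls,\,L\epsilon\tau)$ is equally smooth, so $\phi^{\epsilon}$ is a diffeomorphism. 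I would then identify its image: for $(x,y)\in\boldsymbol{\Omega_h}$ the conditions $0<x<L$ and $0<y<h(x)$ become $0<s<1$ and $0<\tau<h(Ls)/(L\epsilon)=h^{\epsilon}(s)$, whence $\boldsymbol{\Omega_{\epsilon}}=\{(s,\tau):0<s<1,\ 0<\tau<h^{\epsilon}(s)\}$; moreover $h^{\epsilon}\in C^{2}([0,1])$ because it is $h\in C^{2}([0,L])$ precomposed with an affine reparametrization.

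Next I would transfer the function-space memberships via the change-of-variables formula for integrals. Writing $x=Ls$, $y=L\epsilon\tau$ gives $dx\,dy=L^{2}\epsilon\,ds\,d\tau$. For $u^{\epsilon}(s,\tau)=(1/L)\,u(Ls,L\epsilon\tau)$ this yields $\Vert u^{\epsilon}\Vert_{L^{2}(\boldsymbol{\Omega_{\epsilon}})}^{2}=(L^{4}\epsilon)^{-1}\Vert u\Vert_{L^{2}(\boldsymbol{\Omega_h})}^{2}<\infty$, and the analogous computation, with the factor $1/(L\epsilon)$ in place of $1/L$, controls $v^{\epsilon}$; the same constant Jacobian shows these compositions are $L^{1}_{loc}$, so $\mathbf{v}^{\epsilon}\in L^{2}\cap L^{1}_{loc}$. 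The weak first derivatives transform by the chain rule and remain locally integrable, which keeps the construction consistent with Definition~\ref{defi1.2}.

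Finally I would treat $\rho^{\epsilon}$ and $\mu^{\epsilon}$. The key observation is that $L\,u^{\epsilon}(s,\tau)=u(Ls,L\epsilon\tau)$, so $\sigma(s,\tau)=1-\bigl(u(Ls,L\epsilon\tau)^{2}/2i_{0}\bigr)$ and $\sigma_{0}=1-\bigl(U^{2}/2i_{0}\bigr)$ since $LU^{\epsilon}=U$. By Remark~\ref{rem2.1} one has $\sigma>0$ throughout $\boldsymbol{\Omega_{\epsilon}}$, which makes $\sigma^{-1}$ and the real power $\sigma^{19/25}$ well defined and strictly positive. Comparing with Eq.~(\ref{eq:2.26}) and Eq.~(\ref{eq:2.24}) exhibits $\rho^{\epsilon}$ and $\mu^{\epsilon}$ as the compositions $\rho\circ(\phi^{\epsilon})^{-1}$ and $\mu\circ(\phi^{\epsilon})^{-1}$, so their $L^{1}$ and $L^{1}_{loc}$ membership, together with positivity, follows from the same change of variables applied to the hypotheses $\rho\in L^{1}$ and $\mu\in L^{1}_{loc}$ of Definition~\ref{defi1.2} and Lemma~\ref{lem2.2}.

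I do not expect a deep obstacle here: the map is linear and every estimate reduces to bookkeeping with the constant Jacobian $1/(L^{2}\epsilon)$. The only places that genuinely demand care are the correct identification of the scaled boundary curve $h^{\epsilon}$ together with its $C^{2}$ regularity, and the invocation of Remark~\ref{rem2.1} to guarantee $\sigma>0$, without which neither $\sigma^{-1}$ nor the fractional power $\sigma^{19/25}$ would be meaningful.
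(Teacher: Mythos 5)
Your proposal is correct and takes essentially the same approach as the paper: verify that the linear map $\phi^{\epsilon}$ with constant Jacobian $1/(L^{2}\epsilon)$ is a diffeomorphism, transport the $L^{2}$ and $L^{1}_{loc}$ memberships through the change-of-variables formula, and realize $\rho^{\epsilon}$, $h^{\epsilon}$, $\mu^{\epsilon}$ as compositions with $(\phi^{\epsilon})^{-1}$ (what the paper calls the corresponding commutative diagrams), with Remark~\ref{rem2.1} securing $\sigma>0$ so that $\sigma^{-1}$ and $\sigma^{19/25}$ make sense. One small point in your favor: your scaling identity $\Vert u^{\epsilon}\Vert^{2}_{L^{2}\left(\boldsymbol{\Omega_{\epsilon}}\right)}=\left(L^{4}\epsilon\right)^{-1}\Vert u\Vert^{2}_{L^{2}\left(\boldsymbol{\Omega_h}\right)}$ is the correct one, whereas the paper's Eq.~(\ref{eq:2.27}) states the inverted factor $L^{4}\epsilon$; the finiteness conclusion is unaffected either way.
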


\begin{proof}
First of all, $\phi^{\epsilon}$ is linear. Because
$Ker(\phi)=\{(0,0)\}$, it is invertible. Its Jacobian determinant is
$|D\phi^{\epsilon}|=1/(L^2 \epsilon) >0$. Consequently, 
by the Inverse Function Theorem, $\phi^{\epsilon}$ is a
diffeomorphism of $\boldsymbol{\Omega_{h}}$. Second, the vector
field is obtained via the Chain Rule: Let $t\in [0,\infty)$ be 
the time, then 
$u^{\epsilon}=\partial s/\partial t=(\partial s/\partial x)(\partial x/\partial t) =(1/L)u$. 
Similarly, we obtain $v^{\epsilon}$ and the free-stream velocity
$U^{\epsilon}$. Moreover, if 
$u \in L^2\left(\boldsymbol{\Omega_h}\right)$,
\begin{eqnarray*}
   \Vert u\Vert^2_{L^2\left(\boldsymbol{\Omega_h}\right)} &=&
   \iint_{\boldsymbol{\Omega_h}}u^2(x,y)\hspace{2pt}dx\hspace{2pt}dy =
   L^2\epsilon \iint_{\boldsymbol{\Omega_{\epsilon}}}[Lu^{\epsilon}]^2(s,\tau)
   \hspace{2pt}ds\hspace{2pt}d\tau.                              
\end{eqnarray*}
So that,
\begin{equation}\label{eq:2.27}
    \Vert u^{\epsilon}\Vert^2_{L^2\left(\boldsymbol{\Omega_{\epsilon}}\right)}
    =
    L^4\epsilon \Vert u\Vert^2_{L^2\left(\boldsymbol{\Omega_h}\right)}< \infty,
\end{equation}
and $u^{\epsilon} \in L^2\left(\boldsymbol{\Omega_{\epsilon}}\right)$.
In the same way, $u^{\epsilon} \in
L^1_{loc}\left(\boldsymbol{\Omega_{\epsilon}}\right)$, 
and $v^{\epsilon} \in L^1_{loc}\left(\boldsymbol{\Omega_{\epsilon}}\right)
\cap L^2\left(\boldsymbol{\Omega_{\epsilon}}\right)$.
Finally, the density $\rho^{\epsilon}$, the curve $h^{\epsilon}$, 
and the dynamic viscosity $\mu^{\epsilon}$ 
are determined by the corresponding commutative diagrams with $\phi^{\epsilon}$.
\end{proof}

\begin{theorem}[\textbf{Adimensional Model}]\label{theo:2.1}
Let $\rho, u,v,T,p,\kappa,\mu$ be as in Definition~\ref{defi1.2}. 
Suppose they satisfy the Dorodnitzyn's Boundary Layer Model 
given by equations (\ref{eq:1.1}), (\ref{eq:1.2}), (\ref{eq:1.3}),
(\ref{eq:1.4}), (\ref{eq:1.5}), (\ref{eq:1.7}), (\ref{eq:1.8}) 
with boundary conditions 
(\ref{eq:1.12}), (\ref{eq:1.13}), (\ref{eq:1.14}), (\ref{eq:1.15}),
(\ref{eq:1.16}), (\ref{eq:1.17}).
Additio\-nally, assume 
$p=
c_1
\hspace{2pt}
    \left[1-\left(U^2/2i_0\right)\right]^{\frac{b}{(b-1)}}$ in $\boldsymbol{\Omega_h}$.
Then, $u^{\epsilon}$, and $v^{\epsilon}$, as defined in the 
Lemma~\ref{lem2.3} above, verify the following system in
$\boldsymbol{\Omega_{\epsilon}}$:
\noindent
  \begin{align}
div\left(\rho^{\epsilon} u^{\epsilon},\rho^{\epsilon} v^{\epsilon}\right)
& \hspace{2pt} = \hspace{2pt}
0;
 \hspace{7pt}\mbox{and} \label{eq:2.28}\\
L^2 \epsilon^2
  \rho^{\epsilon} 
  \left( 
   u^{\epsilon} \frac{\partial u^{\epsilon}}{\partial s}
   \hspace{3pt} + \hspace{3pt}
  v^{\epsilon} \frac{\partial u^{\epsilon}}{\partial \tau}
  \right)
& \hspace{2pt} = \hspace{2pt}
c_3
\hspace{2pt}
      \frac{\partial}{\partial \tau} 
  \left[
 \sigma^{\frac{19}{25}}
      \frac{\partial u^{\epsilon}}{\partial \tau}
      \right],  
\label{eq:2.29}
\end{align}
with boundary conditions:
\noindent
\begin{align}
\left(u^{\epsilon},v^{\epsilon}\right)|_{\phi^{\epsilon}\left(\Gamma_0\right)}
& \hspace{2pt} = \hspace{2pt}
(0,0); \label{eq:2.30}\\
\left(u^{\epsilon},v^{\epsilon}\right)|_{\phi^{\epsilon}\left(\Gamma_h\right)}
& \hspace{2pt} = \hspace{2pt}
\left(-LU^{\epsilon},0\right);
 \hspace{7pt}\mbox{and} \label{eq:2.31}\\
 \left(u^{\epsilon}\left(0,\tau \right),0\right)
& \hspace{2pt} = \hspace{2pt}
\left(u^{\epsilon}\left(1,\tau \right),0\right),
   \hspace{4pt} \forall \tau \in \left[0,\delta/(L\epsilon)\right]; 
\label{eq:2.32}
\end{align}  
where $\rho^{\epsilon}$ and $\sigma$ depend of $u^{\epsilon}$,
in the way described in Lemma~\ref{lem2.3}.
\end{theorem}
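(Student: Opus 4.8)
The plan is to prove the theorem by direct substitution, pushing the original conservation laws (\ref{eq:1.1}) and (\ref{eq:1.2}) forward through the change of variables $\phi^\epsilon$ of Lemma~\ref{lem2.3} and keeping careful track of the scaling factors that each derivative carries. Since $s=x/L$ and $\tau=y/(L\epsilon)$ one has $\partial/\partial x=(1/L)\,\partial/\partial s$ and $\partial/\partial y=(1/(L\epsilon))\,\partial/\partial\tau$, while the defining relations of Lemma~\ref{lem2.3} give $u=Lu^\epsilon$ and $v=L\epsilon\,v^\epsilon$. A preliminary fact I would record first is that the density and the viscosity are \emph{invariant} under the transformation: comparing the definition $\rho^\epsilon=c_2[\sigma_0]^{b/(b-1)}\sigma^{-1}$ with the constant-pressure expression (\ref{eq:2.26}) of Corollary~\ref{cor2.2}, and using $Lu^\epsilon=u$ so that $\sigma=1-(u^2/2i_0)$, gives $\rho^\epsilon(s,\tau)=\rho(x,y)$; the same comparison with (\ref{eq:2.24}) gives $\mu^\epsilon(s,\tau)=\mu(x,y)$. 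This is what lets the factor $\sigma^{19/25}$ appear on the right-hand side of (\ref{eq:2.29}).

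For the continuity equation I would substitute $\rho u=\rho^\epsilon(Lu^\epsilon)$ and $\rho v=\rho^\epsilon(L\epsilon\,v^\epsilon)$ into (\ref{eq:1.1}). The $x$-derivative gives $(1/L)\,\partial_s(\rho^\epsilon Lu^\epsilon)=\partial_s(\rho^\epsilon u^\epsilon)$ and the $y$-derivative gives $(1/(L\epsilon))\,\partial_\tau(\rho^\epsilon L\epsilon\,v^\epsilon)=\partial_\tau(\rho^\epsilon v^\epsilon)$, so both scalings cancel exactly and (\ref{eq:1.1}) becomes $\partial_s(\rho^\epsilon u^\epsilon)+\partial_\tau(\rho^\epsilon v^\epsilon)=0$, which is (\ref{eq:2.28}).

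The momentum equation is where the bookkeeping matters and where the factor $L^2\epsilon^2$ must emerge. By the hypothesis $p=c_1[1-(U^2/2i_0)]^{b/(b-1)}$ in $\boldsymbol{\Omega_h}$, i.e.\ (\ref{eq:2.25}), the pressure is constant, so $\partial p/\partial x=0$ and that term drops. On the left-hand side $u\,\partial u/\partial x=Lu^\epsilon\,\partial_s u^\epsilon$ and $v\,\partial u/\partial y=Lv^\epsilon\,\partial_\tau u^\epsilon$ (the $\epsilon$ from $v=L\epsilon\,v^\epsilon$ cancelling the $1/\epsilon$ produced by $\partial_y$ acting on $u=Lu^\epsilon$), so the convective term equals $L\rho^\epsilon(u^\epsilon\partial_s u^\epsilon+v^\epsilon\partial_\tau u^\epsilon)$. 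On the right-hand side the two successive $y$-derivatives contribute $(1/(L\epsilon))^2$, and the single power of $L$ from $u=Lu^\epsilon$ reduces this to $1/(L\epsilon^2)$, so $\partial_y(\mu\,\partial_y u)=(c_3/(L\epsilon^2))\,\partial_\tau(\sigma^{19/25}\partial_\tau u^\epsilon)$. Equating the two sides and multiplying through by $L\epsilon^2$ to clear the common factor yields precisely (\ref{eq:2.29}).

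Finally, the boundary conditions transform by evaluating the definitions of Lemma~\ref{lem2.3} on the images $\phi^\epsilon(\Gamma_0)$ and $\phi^\epsilon(\Gamma_h)$: the no-slip condition (\ref{eq:1.13}) forces $u^\epsilon=v^\epsilon=0$ there, giving (\ref{eq:2.30}); the free-stream value (\ref{eq:1.12}) is carried over to (\ref{eq:2.31}); and since $x=0,L$ correspond to $s=0,1$ and $y\in(0,\delta)$ to $\tau\in(0,\delta/(L\epsilon))$, the periodicity (\ref{eq:1.16}) becomes (\ref{eq:2.32}). I expect the only genuinely delicate point to be justifying the chain rule at the level of \emph{weak} $L^1_{loc}$ derivatives rather than classical ones; this is harmless here because $\phi^\epsilon$ is a linear isomorphism with constant Jacobian, for which affine substitution commutes with weak differentiation, so the formal manipulations above are rigorous. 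The remaining work is just careful accounting of the powers of $L$ and $\epsilon$.
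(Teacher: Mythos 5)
Your proposal is correct and takes essentially the same approach as the paper: a direct chain-rule substitution through the linear map $\phi^{\epsilon}$, using $\partial/\partial x=(1/L)\,\partial/\partial s$ and $\partial/\partial y=(1/(L\epsilon))\,\partial/\partial \tau$, the observation that the constant-pressure hypothesis gives $\partial p/\partial x=0$ and makes $\rho^{\epsilon}$ and $\mu^{\epsilon}$ agree pointwise with $\rho$ and $\mu$, and a final multiplication by $L\epsilon^{2}$ to arrive at (\ref{eq:2.29}). If anything, your write-up is slightly more complete than the paper's, which neither verifies the boundary conditions (\ref{eq:2.30})--(\ref{eq:2.32}) explicitly nor remarks that the chain rule is legitimate for weak derivatives under an affine change of variables.
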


\begin{proof}
Considering the new directions, the generalized partial derivatives 
$\partial u/\partial x=\partial u^{\epsilon}/\partial s$;
$\partial u/\partial y  =  (1/\epsilon)\hspace{2pt} 
\partial u^{\epsilon}/\partial \tau$; \hspace{2pt}
$\partial v/\partial y=\partial v^{\epsilon}/\partial \tau$;\hspace{2pt}
$u\hspace{2pt}(\partial u/\partial x) = L\hspace{2pt} u^{\epsilon}\hspace{2pt}
(\partial u^{\epsilon}/\partial s)$; \hspace{2pt} and
$v(\partial u/\partial y)  =  L\hspace{2pt} v^{\epsilon}
\hspace{2pt}(\partial u^{\epsilon}/\partial \tau)$.
The weak derivative
$\partial p/\partial x=0$ because $U$ is constant. 
Similarly, $p$ allows us to see  
\[\rho=
      c_2
     \hspace{2pt}
      \left[1-\left(U^2/2i_0\right)\right]^{\frac{b}{(b-1)}}/
      \left[ 1-\left( 
      u^2\left(x,y\right)/2i_0\right)\right].\]
Therefore,      
$\partial/\partial y \left[ \mu
      \left(
          \partial u/\partial y
      \right)
      \right]=$
      $L^{-1}\epsilon^{-2} c_3
      \hspace{2pt}
    \partial/\partial \tau
  \left[
 \sigma^{\frac{19}{25}}
      \left(
      \partial u^{\epsilon}/\partial \tau
      \right)
      \right]$.
Finally, each term is substituted on each side of Eq. (\ref{eq:1.1})
and (\ref{eq:1.2}) to obtain Eq. (\ref{eq:2.28}) and 
(\ref{eq:2.29}).
\end{proof}

\subsection{Incompressible Model}

The domain's shape $\boldsymbol{\Omega_h}$ described in
Definition~\ref{defi1.1} is different from the rectangular one 
in the original Dorodnitzyn's article. In addi\-tion, there is no
domain $\boldsymbol{\Omega_{\epsilon}}$ in Dorodnitzyn's work,
because this was obtained with the appli\-cation of Bayada and
Chambat's diffeo\-morphism $\phi^{\epsilon}$. Therefore, it is
necessary to make an adjustment on Dorodnitzyn's change of
variables to take into account the points 
$(s,\tau)\in \boldsymbol{\Omega_{\epsilon}}$  over a height 
$\phi^{\epsilon}\left(0,\delta\right)=\left(0,\delta/(L\epsilon)
\right)$,
as is done in Eq. (\ref{eq:2.34}) below. This new diffeormorphism
allows us to take the Adimensional Model into an incompressible
form.

\begin{lemma}\label{lem2.4}
Let $h \in C^{2}\left([0,L],\left(0,\infty\right)\right)$ 
have only one critical point which is a maximum.
Let $\boldsymbol{\Omega_{\epsilon}}$ and $\rho^{\epsilon}$ be 
as described in Lemma~\ref{lem2.3}.
Suppose that the weak derivative
$\partial u/\partial x=0$ \emph{a.e.} in $\boldsymbol{\Omega_{h}}$.
Then, there is a diffeomorphism
$\boldsymbol{\eta} =(\eta_1,\eta_2)\colon \boldsymbol{\Omega_{\epsilon}}\to \mathbb{R}^2$
 such that $\forall s \in [0,1]$: 
\noindent
\begin{align}
  \eta_1 (s,\tau)
& \hspace{2pt} = \hspace{2pt}
  \int_{0}^{s} \frac{1 }
  {\rho^{\epsilon}\left(\zeta,\tau \right)}
  \hspace{2pt}d\zeta,
 \hspace{7pt}
\forall \tau \in \left[0,\delta/(L\epsilon)\right); \label{eq:2.33}\\
  \eta_1 (s,\tau)
& \hspace{2pt} = \hspace{2pt}
  \int_{\tilde{s}}^{s} \frac{1 }
  {\rho^{\epsilon}\left(\zeta,\tau \right)}
  \hspace{2pt}d\zeta,
  \hspace{9pt}
  \forall \tau \in h^{\epsilon}\left(\left[0,1\right]\right);
 \hspace{7pt}\mbox{and} \label{eq:2.34}\\
 \eta_2(s,\tau)
& \hspace{2pt} = \hspace{2pt}
  \int_0^{\tau} \rho^{\epsilon}\left(s,\xi \right)
  \hspace{2pt}d\xi;  
\label{eq:2.35}
\end{align}  
\noindent
where $\tilde{s}$ is the preimage of 
$\tau=h^{\epsilon}\left(\tilde{s}\right)\in h^{\epsilon}\left(\left[0,1\right]\right)$
such that the slope $\partial h^{\epsilon}/\partial s \left(\tilde{s}\right)\geq 0$.
\end{lemma}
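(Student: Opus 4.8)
The plan is to use the hypothesis $\partial u/\partial x = 0$ to collapse $\rho^{\epsilon}$ to a function of the single variable $\tau$, and then to establish that $\boldsymbol{\eta}$ is a diffeomorphism onto its image by checking, in turn, that it is well defined and $C^1$ on all of $\boldsymbol{\Omega_{\epsilon}}$, that its Jacobian never vanishes, and that it is injective; the Inverse Function Theorem then closes the argument. The first move is to translate the hypothesis: since $\partial u/\partial x = 0$ a.e., the rescaled component $u^{\epsilon}$ of Lemma~\ref{lem2.3} depends on $\tau$ alone, hence so do $\sigma = 1 - ([Lu^{\epsilon}]^2/2i_0)$ and $\rho^{\epsilon} = c_2[\sigma_0]^{b/(b-1)}\sigma^{-1}$. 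By Remark~\ref{rem2.1} one has $\sigma \neq 0$, so $\rho^{\epsilon}$ is well defined, strictly positive, and inherits a locally integrable weak derivative in $\tau$ from $u^{\epsilon}$; the integral expressions (\ref{eq:2.33})--(\ref{eq:2.35}) therefore define locally Lipschitz functions to which differentiation under the integral sign applies.

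Next I would verify that $\boldsymbol{\eta}$ is unambiguously defined on the whole domain. The single-maximum hypothesis on $h \in C^2$ forces $h^{\epsilon}$ to be strictly increasing on $[0,s_*]$, where $s_*$ is the unique maximizer, so that for each height $\tau \in (\delta/(L\epsilon),1)$ the left preimage $\tilde{s}(\tau)$ singled out by $\partial h^{\epsilon}/\partial s(\tilde{s}) \geq 0$ exists, is unique, and is $C^1$ in $\tau$ off the vertex by the one-dimensional inverse function theorem (the derivative being nonzero away from $s_*$). At the interface height $\tau = \delta/(L\epsilon) = h^{\epsilon}(0)$ one has $\tilde{s} = 0$, so formula (\ref{eq:2.34}) collapses to (\ref{eq:2.33}); the two branches agree, and $\boldsymbol{\eta}$ is $C^1$ across the interface.

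The computational heart is the Jacobian. Because $\rho^{\epsilon}$ is independent of $s$, in either branch $\partial_s \eta_1 = 1/\rho^{\epsilon}(\tau)$ and $\partial_\tau \eta_2 = \rho^{\epsilon}(\tau)$, while the remaining off-diagonal entry is $\partial_s \eta_2 = \int_0^{\tau} \partial_s \rho^{\epsilon}\, d\xi = 0$. Hence $\det D\boldsymbol{\eta} = \partial_s \eta_1\, \partial_\tau \eta_2 - \partial_\tau \eta_1\, \partial_s \eta_2 = (1/\rho^{\epsilon})\rho^{\epsilon} - 0 = 1 > 0$ everywhere; note that the potentially awkward entry $\partial_\tau \eta_1$, which carries the factor $d\tilde{s}/d\tau$, is harmlessly multiplied by $\partial_s \eta_2 = 0$. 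For injectivity, $\eta_2 = \int_0^{\tau}\rho^{\epsilon}(\xi)\,d\xi$ is independent of $s$ and strictly increasing in $\tau$ (as $\rho^{\epsilon} > 0$), so it recovers $\tau$; with $\tau$ fixed, $s \mapsto \eta_1$ is strictly increasing (its $s$-derivative $1/\rho^{\epsilon}$ is positive), so it recovers $s$. A $C^1$ map with nonvanishing Jacobian is a local diffeomorphism, and injectivity upgrades this to a diffeomorphism onto $\boldsymbol{\eta}(\boldsymbol{\Omega_{\epsilon}})$.

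I expect the main obstacle to be regularity near the vertex $\tau \to 1$, where $\partial h^{\epsilon}/\partial s$ vanishes and the inverse $\tilde{s}(\tau)$ loses its $C^1$ character; a secondary, bookkeeping-type difficulty is justifying the differentiations above from the weak-derivative hypotheses on $u$ rather than from classical smoothness. Both should be surmountable: the vertex is a single boundary point of the open set $\boldsymbol{\Omega_{\epsilon}}$, on whose interior increasing branch $\tilde{s}$ is genuinely $C^1$, and in any event $\partial_\tau \eta_1$ never enters the determinant, so the diffeomorphism property on the open domain is unaffected.
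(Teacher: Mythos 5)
Your proposal is correct and follows essentially the same route as the paper's proof: well-definedness of the integrals from the positivity and boundedness of $\sigma$ (Remark~\ref{rem2.1}), a unit Jacobian obtained by killing the cross term (the paper phrases this as $\left(\partial \eta_1/\partial \tau\right)\left(\partial \eta_2/\partial s\right)=0$ via monotone convergence, while you get $\partial \eta_2/\partial s=0$ directly from $\rho^{\epsilon}$ depending only on $\tau$ — the same use of the hypothesis $\partial u/\partial x=0$), and then the Inverse Function Theorem. Your additions — the branch-matching check at $\tau=\delta/(L\epsilon)$ and, especially, the global injectivity argument (recover $\tau$ from the strictly increasing $\eta_2$, then $s$ from the strictly increasing $s\mapsto\eta_1$) — actually strengthen the paper's argument, which invokes only the local Inverse Function Theorem to conclude a global diffeomorphism; the one overstatement on your side is the claim that $\boldsymbol{\eta}$ is $C^1$ across the interface, since $\partial_\tau \eta_1$ generically jumps there by $-\tilde{s}\,'(\tau)/\rho^{\epsilon}$, a regularity defect the paper shares but never addresses.
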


\begin{proof}
By definition, $\forall (s,\tau)\in \boldsymbol{\Omega_{\epsilon}}$, $\rho^{\epsilon}(s,\tau)= \rho(Ls,L\epsilon \tau)>0$.
From the Remark~\ref{rem2.1}, we know that $\sigma$ is positive
and bounded by $1$. In addition, if $h$ has one unique critical
maximum in its domain, $h^{\epsilon}$ does as well.
In fact, the top cover of $\boldsymbol{\Omega_{\epsilon}}$ is given
by the curve $h^{\epsilon}$, where each image $\tau=h^{\epsilon}(s)$,
different from its cusp, has exactly two preimages,
one of them on the ascending part of the curve where 
$\partial h^{\epsilon}/\partial s \left(\tilde{s}\right)\geq 0$.
So that the horizontal segment $(\tilde{s},s)\times \{\tau\}$
is contained in $\boldsymbol{\Omega_{\epsilon}}$.
Thus, each Riemann integral $\eta_1 (s,\tau)$ 
is the limit of an of increasing and bounded sequence of Darboux 
sums which add positive values taken by the function $\sigma$ over
a horizontal and bounded segment contained in $\boldsymbol{\Omega_{\epsilon}}$.
As a result, for each $(s,\tau)\in \boldsymbol{\Omega_{\epsilon}}$, 
the sequence of sums converges and $\eta_1$ is well defined.
In addition, the Remark~\ref{rem2.1} implies that $\sigma$ is strictly positive. 
Then, $\eta_2$ is a well defined function in $\boldsymbol{\Omega_{\epsilon}}$.
Two of its partial derivatives are
$\partial \eta_1/\partial s=1/\rho^{\epsilon}$, and 
$\partial \eta_2/\partial \tau=\rho^{\epsilon}$.
By the Monotone Convergence Theorem, if 
$\left(\partial u/\partial x\right)=0$ \emph{a. e.} in $\boldsymbol{\Omega_h}$,
we calculate the product
$\left(\partial \eta_1/\partial \tau\right)\left(\partial \eta_2/\partial s\right)=0$.
Then, the Jacobian determinant $\left|D\boldsymbol{\eta}\right|=1$.
Hence, by the Inverse Function Theorem,
$\boldsymbol{\eta}$ is a diffeormorphism of $\boldsymbol{\Omega_{\epsilon}}$.
\end{proof}

\begin{theorem}[\textbf{Incompressible Model}]\label{theo:2.2}
Let $\rho, u,v,T,p,\kappa,\mu$ be as in Definition~\ref{defi1.2}.
Suppose they satisfy the Dorodnitzyn's Boundary Layer Model given by
equations (\ref{eq:1.1}), (\ref{eq:1.2}), (\ref{eq:1.3}),
(\ref{eq:1.4}), (\ref{eq:1.5}), (\ref{eq:1.7}), (\ref{eq:1.8}) 
with boundary conditions 
(\ref{eq:1.12}), (\ref{eq:1.13}), (\ref{eq:1.14}), (\ref{eq:1.15}),
(\ref{eq:1.16}), (\ref{eq:1.17}),
$p(x,y)=p\left(x, h\left(x\right)\right)$ $\forall (x,y) \in \boldsymbol{\Omega_h}$,
and 
$\partial u/\partial x=0$ \emph{a.e.} in $\boldsymbol{\Omega_{h}}$.
Consider $u^{\epsilon}$, $v^{\epsilon}$, $\rho^{\epsilon}$,
$\sigma$, and $\sigma_0$ as in  Lemma~\ref{lem2.3}, and the domain
$\boldsymbol{\eta}\left(\boldsymbol{\Omega_{\epsilon}}\right)=\boldsymbol{\Omega}$
as defined in Lemma~\ref{lem2.4}.
Then, there exists a \emph{stream-function} $\psi$ such that
$\partial \psi/\partial s=-\hspace{2pt}\rho^{\epsilon}\hspace{2pt} v^{\epsilon}$,
$\partial \psi/\partial \tau=\hspace{2pt}\rho^{\epsilon}\hspace{2pt} u^{\epsilon}$;
and a vector field $F^{\epsilon}=(F_1^{\epsilon},F_2^{\epsilon})\in
L^2\left(\boldsymbol{\Omega}; \mathbb{R}^2\right)\cap 
L^1_{loc}\left(\boldsymbol{\Omega}; \mathbb{R}^2\right)$,
$F_1^{\epsilon} =\partial \psi/\partial \eta_2$
and $F_2^{\epsilon}=-\partial \psi/\partial \eta_1$,
that satisfies:
\begin{align}
div\hspace{2pt}(F_1^{\epsilon},F_2^{\epsilon})
& \hspace{2pt} = \hspace{2pt}
0
 \hspace{7pt}\mbox{and} \label{eq:2.36}\\
L^2\epsilon^2 \left\{ 
F_1^{\epsilon}\frac{\partial F_1^{\epsilon}}{\partial \eta_1}+
F_2^{\epsilon}\frac{\partial F_1^{\epsilon}}{\partial \eta_2}
\right\}
& \hspace{2pt} = \hspace{2pt}
C \tilde{\sigma}^{-1} \frac{\partial}{\partial \eta_2}
\left[\tilde{\sigma}^{-\frac{6}{25}} 
\frac{\partial F_1^{\epsilon}}{\partial \eta_2}\right], 
\label{eq:2.37}
\end{align}  
\noindent
where
$\boldsymbol{\eta^{-1}}$ is the inverse function of $\boldsymbol{\eta}$,
$\tilde{\sigma}=\sigma \circ \boldsymbol{\eta^{-1}}$, and
$C\hspace{2pt}=\hspace{2pt}c_3 \hspace{2pt}c_2^2\hspace{2pt}
\sigma_0^{\frac{2b}{b-1}}$ as denoted in Lemma~\ref{lem2.2}.
Moreover, the boundary conditions are given, for all 
$\left(\eta_1,\eta_2\right)\in \partial \boldsymbol{\Omega}$, by:
\begin{eqnarray}\label{eq:2.38}
F^{\epsilon}|_{\partial
\boldsymbol{\Omega}}
\left(\eta_1\left(s,\tau \right),\eta_2\left(s,\tau \right)\right)
&=&
\left(u^{\epsilon}|_{\partial
\boldsymbol{\Omega_{\epsilon}}}\left(s,\tau \right),0\right).
\end{eqnarray}
\end{theorem}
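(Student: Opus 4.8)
The plan is to realise $F^{\epsilon}$ as the skew-gradient of a single scalar stream function, transported through Dorodnitzyn's change of variables $\boldsymbol{\eta}$ of Lemma~\ref{lem2.4}, so that incompressibility (\ref{eq:2.36}) becomes automatic and only the momentum balance (\ref{eq:2.37}) demands real work. First I would construct $\psi$: equation (\ref{eq:2.28}) says that $(\rho^{\epsilon}u^{\epsilon},\rho^{\epsilon}v^{\epsilon})$ is divergence-free on the simply connected $\boldsymbol{\Omega_{\epsilon}}$, so the $1$-form $\rho^{\epsilon}u^{\epsilon}\,d\tau-\rho^{\epsilon}v^{\epsilon}\,ds$ is closed and the Poincar\'e lemma yields (weakly) a potential with $\partial\psi/\partial\tau=\rho^{\epsilon}u^{\epsilon}$ and $\partial\psi/\partial s=-\rho^{\epsilon}v^{\epsilon}$. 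Setting $F_1^{\epsilon}=\partial\psi/\partial\eta_2$ and $F_2^{\epsilon}=-\partial\psi/\partial\eta_1$ in the $\boldsymbol{\eta}$-coordinates, the divergence $\partial F_1^{\epsilon}/\partial\eta_1+\partial F_2^{\epsilon}/\partial\eta_2$ is the difference of the two mixed second derivatives of $\psi$ and so vanishes, giving (\ref{eq:2.36}); membership $F^{\epsilon}\in L^2\cap L^1_{loc}$ follows from the unit Jacobian of $\boldsymbol{\eta}$ together with the bounds on $u^{\epsilon},v^{\epsilon},\rho^{\epsilon}$.

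Next I would exploit the hypothesis $\partial u/\partial x=0$. Then $u^{\epsilon}=u^{\epsilon}(\tau)$, so $\sigma$ and $\rho^{\epsilon}$ depend only on $\tau$ and $\partial(\rho^{\epsilon}u^{\epsilon})/\partial s=0$; continuity (\ref{eq:2.28}) forces $\rho^{\epsilon}v^{\epsilon}$ to be independent of $\tau$, and the no-slip value (\ref{eq:2.30}) at $\tau=0$ then gives $v^{\epsilon}\equiv0$ and $\partial\eta_2/\partial s=0$. Reading the inverse Jacobian off $\partial\eta_1/\partial s=1/\rho^{\epsilon}$, $\partial\eta_2/\partial\tau=\rho^{\epsilon}$, $\partial\eta_2/\partial s=0$, the chain rule through $\boldsymbol{\eta}$ produces $F_2^{\epsilon}=(\rho^{\epsilon})^2v^{\epsilon}=0$ and $F_1^{\epsilon}=u^{\epsilon}+\rho^{\epsilon}v^{\epsilon}\,\partial\eta_1/\partial\tau=u^{\epsilon}$. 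The same chain rule gives the general identity $\rho^{\epsilon}\left(u^{\epsilon}\,\partial/\partial s+v^{\epsilon}\,\partial/\partial\tau\right)=F_1^{\epsilon}\,\partial/\partial\eta_1+F_2^{\epsilon}\,\partial/\partial\eta_2$, so applying it to $u^{\epsilon}=F_1^{\epsilon}$ sends the left side of (\ref{eq:2.29}), times $L^2\epsilon^2$, exactly onto the left side of (\ref{eq:2.37}) (both being zero here, since $v^{\epsilon}=0$ and $\partial F_1^{\epsilon}/\partial\eta_1=0$).

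The crux is the viscous term. Since the bracket in (\ref{eq:2.29}) is a function of $\tau$, equivalently of $\eta_2$, alone, I would use $\partial/\partial\tau=\rho^{\epsilon}\,\partial/\partial\eta_2$ on such functions together with the explicit density $\rho^{\epsilon}=c_2\sigma_0^{b/(b-1)}\sigma^{-1}$ of Lemma~\ref{lem2.3}. Writing $K=c_2\sigma_0^{b/(b-1)}$, the first transport gives $\partial u^{\epsilon}/\partial\tau=\rho^{\epsilon}\,\partial F_1^{\epsilon}/\partial\eta_2$, whence $\sigma^{19/25}\,\partial u^{\epsilon}/\partial\tau=K\sigma^{-6/25}\,\partial F_1^{\epsilon}/\partial\eta_2$ (using $19/25-1=-6/25$); applying $\partial/\partial\tau=K\sigma^{-1}\,\partial/\partial\eta_2$ once more and multiplying by $c_3$ yields
\[c_3\,\frac{\partial}{\partial\tau}\!\left[\sigma^{\frac{19}{25}}\frac{\partial u^{\epsilon}}{\partial\tau}\right]=c_3K^2\,\tilde{\sigma}^{-1}\frac{\partial}{\partial\eta_2}\!\left[\tilde{\sigma}^{-\frac{6}{25}}\frac{\partial F_1^{\epsilon}}{\partial\eta_2}\right],\]
and $c_3K^2=c_3c_2^2\sigma_0^{2b/(b-1)}=C$ is exactly the constant of the statement, so the right side of (\ref{eq:2.29}) is carried onto the right side of (\ref{eq:2.37}). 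As (\ref{eq:2.29}) holds, so does (\ref{eq:2.37}). Finally, $F_1^{\epsilon}=u^{\epsilon}$ and $F_2^{\epsilon}=0$ on $\boldsymbol{\eta}(\partial\boldsymbol{\Omega_{\epsilon}})=\partial\boldsymbol{\Omega}$ deliver the boundary condition (\ref{eq:2.38}).

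I expect the main obstacle to be carrying all of this out at the stated $L^1_{loc}$-weak regularity rather than classically: justifying the Poincar\'e lemma and the symmetry of the mixed weak derivatives of $\psi$, validating the chain rule and the identity $\partial/\partial\tau=\rho^{\epsilon}\,\partial/\partial\eta_2$ for weak derivatives under the merely Lipschitz-type map $\boldsymbol{\eta}$, and reconciling the two-piece definition (\ref{eq:2.33})--(\ref{eq:2.34}) of $\eta_1$ across the crest of $h^{\epsilon}$ so that $F^{\epsilon}$ is unambiguously defined on all of $\boldsymbol{\Omega}$.
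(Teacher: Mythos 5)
Your proposal is correct and follows the paper's overall construction: $\psi$ is obtained from the Poincar\'e lemma applied to the divergence-free field $(\rho^{\epsilon}u^{\epsilon},\rho^{\epsilon}v^{\epsilon})$ on the simply connected $\boldsymbol{\Omega_{\epsilon}}$; the skew gradient $F^{\epsilon}=(\partial\psi/\partial\eta_2,\,-\partial\psi/\partial\eta_1)$ makes (\ref{eq:2.36}) automatic; the viscous term is transported by $\partial\eta_2/\partial\tau=\rho^{\epsilon}$ together with $\rho^{\epsilon}=c_2\,\sigma_0^{b/(b-1)}\sigma^{-1}$, producing the exponent $19/25-1=-6/25$ and the constant $C=c_3c_2^2\sigma_0^{2b/(b-1)}$; and (\ref{eq:2.38}) is read off from $F_1^{\epsilon}=u^{\epsilon}$ on the boundary. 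The genuine difference is in how the momentum equation is verified, and your version is the more careful one. The paper rewrites (\ref{eq:2.29}) in terms of $\psi$ and passes to $\boldsymbol{\eta}$-coordinates through the identities (\ref{eq:2.41}), which silently discard the chain-rule cross terms $(\partial\psi/\partial\eta_1)(\partial\eta_1/\partial\tau)$ and $(\partial\psi/\partial\eta_2)(\partial\eta_2/\partial s)$: the second vanishes because $\partial u/\partial x=0$ makes $\rho^{\epsilon}$, hence $\eta_2$, independent of $s$, but the first vanishes only because $\partial\psi/\partial\eta_1=-(\rho^{\epsilon})^2 v^{\epsilon}$ and $v^{\epsilon}\equiv 0$ --- a fact the paper never establishes, and which is needed since $\partial\eta_1/\partial\tau\not\equiv 0$ in general (on the branch (\ref{eq:2.33}), $\eta_1=s/\rho^{\epsilon}(\tau)$ with $\rho^{\epsilon}$ genuinely depending on $\tau$). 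You supply exactly this missing ingredient, deducing $v^{\epsilon}\equiv 0$ from continuity, $\partial u/\partial x=0$, and no-slip at $\tau=0$, so that $F^{\epsilon}=(u^{\epsilon},0)$, both convective sides of (\ref{eq:2.37}) vanish identically, and only the one-variable computation for the viscous flux remains. This buys rigor at the decisive chain-rule step and makes explicit that under the theorem's hypotheses equation (\ref{eq:2.37}) is degenerate (both members of the convective balance are zero), a point the paper's formal manipulation obscures; conversely, the regularity caveats you flag at the end (weak Poincar\'e lemma, chain rule for weak derivatives, the two-branch definition of $\eta_1$) are left equally unaddressed by the paper, which computes formally, so they do not separate the two arguments. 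Incidentally, your sign $F_2^{\epsilon}=+(\rho^{\epsilon})^2v^{\epsilon}$ is the internally consistent one; the last member of the paper's Eq. (\ref{eq:2.40}) carries a sign slip, immaterial here since $v^{\epsilon}\equiv 0$.
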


\begin{proof}
First, under these conditions, $u^{\epsilon}$ and $v^{\epsilon}$ 
verify the system described in Theorem~\ref{theo:2.1},
and, according to Lemma~\ref{lem2.4}, $\boldsymbol{\eta}$ is a diffeomorphism of $\boldsymbol{\Omega_{\epsilon}}$.
Second, Eq. (\ref{eq:2.36}) allows the definition of a 
stream-function given a fixed point $(s_0,\tau_0)\in \boldsymbol{\Omega_{\epsilon}}$. 
Third, Eq. (\ref{eq:2.37}) is wri\-tten in terms of its partial
derivatives. Then, these partial derivatives are calculated in 
the new coordinates $\eta_1$ and $\eta_2$.
Finally, the left side and right side of the new equation for the
stream-function's original partial derivatives is presented in the 
new directions, and substituted by the field's $F^{\epsilon}$
coordinate functions. The boundary conditions are determined
as a direct result of the vector field's  definition,
where it can be seen that it satisfies the relations:
For all $(\eta_1,\eta_2)\in \boldsymbol{\Omega}$
such that $\boldsymbol{\eta}(s,\tau)=(\eta_1,\eta_2)$,
\begin{align}
F_1^{\epsilon}(\eta_1,\eta_2)
& \hspace{2pt} = \hspace{2pt}
\frac{\partial \psi}{\partial \eta_2}(\eta_1,\eta_2)
=\frac{1}{\rho^{\epsilon}}\frac{\partial \psi}{\partial \tau}(s,\tau)=u^{\epsilon}(s,\tau);
 \hspace{7pt}\mbox{and} \label{eq:2.39}\\
F_2^{\epsilon}(\eta_1,\eta_2)
& \hspace{2pt} = \hspace{2pt}
-\frac{\partial \psi}{\partial \eta_1}(\eta_1,\eta_2)=
-\rho^{\epsilon} \frac{\partial \psi}{\partial s}(s,\tau)
=-(\rho^{\epsilon})^2 v^{\epsilon}(s,\tau).
\label{eq:2.40}
\end{align}
\noindent
In particular, the repeated argument made for Eq. (\ref{eq:2.26})
and the Cauchy-Schwarz inequality for the $L^2$-norm implies that
the vector $F^{\epsilon}\in
L^2\left(\boldsymbol{\Omega}; \mathbb{R}^2\right)$.
Moreover, $F^{\epsilon}\in L^1_{loc}\left(\boldsymbol{\Omega}; \mathbb{R}^2\right)$
and has inherited locally integrable weak partial derivatives.

If Eq. (\ref{eq:2.39}), for each fixed point $(s_0,\tau_0)\in \boldsymbol{\Omega_{\epsilon}}$
and each $\left(s,\tau\right)\in \boldsymbol{\Omega}_{\epsilon}$,
the Poincare's Lemma implies that the integral
\begin{equation*}
\psi \left(s,\tau\right)\colon =\int_{\gamma}
\left(-\rho^{\epsilon}v^{\epsilon}\right)ds+
\left(\rho^{\epsilon}u^{\epsilon}\right)d\tau,
\end{equation*}
has the same real value for every $\gamma \colon [0,1] \to \boldsymbol{\bar{\Omega}_{\epsilon}}$ 
such that $\gamma(0)=(s_0,\tau_0)$ and $\gamma(1)=(s,\tau)$.
This is, the streamfunction $\psi$ is well defined on $\boldsymbol{\Omega}_{\epsilon}$.

In order to calculate its derivatives, it is enough to pick a
trajectory built by pieces where one variable is fixed. 
Substitution of $u^{\epsilon}$ and $v^{\epsilon}$ in terms of 
the streamfunction's derivatives,
$\partial \psi/\partial s=-\hspace{2pt}\rho^{\epsilon}
\hspace{2pt} v^{\epsilon}$ and
$\partial \psi/\partial \tau=\hspace{2pt}\rho^{\epsilon}
\hspace{2pt} u^{\epsilon}$, and the hypothesis that
$\rho^{\epsilon}$ is not null at any point of its domain,
allows us to write Eq. (\ref{eq:2.37}) in terms of  $\psi$ as: 
\begin{equation*}
  L^2 \epsilon^2
    \left[
    \frac{\partial \psi}{\partial \tau}
    \frac{\partial }{\partial s}
    \left[
    \frac{1}{\rho^{\epsilon}}
    \frac{\partial \psi}{\partial \tau}
    \right]
    -
    \frac{\partial \psi}{\partial s}
    \frac{\partial }{\partial \tau}
    \left[
    \frac{1}{\rho^{\epsilon}}
    \frac{\partial \psi}{\partial \tau}
    \right]
    \right]
   = 
c_3
      \frac{\partial}{\partial \tau} 
  \left[
      \sigma^{\frac{19}{25}}
      \frac{\partial }{\partial \tau}
      \left(
      \frac{1}{\rho^{\epsilon}}
      \frac{\partial \psi}{\partial \tau}
      \right)
      \right].
\end{equation*}
\noindent

Additionally, there is a new domain 
$\boldsymbol{\Omega}\subset \mathbb{R}^2$ where:
\begin{equation}\label{eq:2.41}
\frac{\partial \psi}{\partial \tau}=
\frac{\partial \psi}{\partial \eta_2}
\frac{\partial \eta_2}{\partial \tau}=
\rho^{\epsilon} 
\frac{\partial \psi}{\partial \eta_2}
\hspace{7pt}\&\hspace{7pt}
\frac{\partial \psi}{\partial s}=
\frac{\partial \psi}{\partial \eta_1}
\frac{\partial \eta_1}{\partial s}
=
\frac{1}{\rho^{\epsilon} }
\frac{\partial \psi}{\partial \eta_1}.
\end{equation}
Once again, substitution of identities in Eq. (\ref{eq:2.41})
in the left side of the equation above and the definition of
$F^{\epsilon}$ give a new expression for the nonlinear term as:
\begin{eqnarray*}
  L^2 \epsilon^2
    \left[
    \frac{\partial \psi}{\partial \eta_2}
    \frac{\partial^2 \psi}{\partial \eta_1 \partial \eta_2}
    -
    \frac{\partial \psi}{\partial \eta_1}
    \frac{\partial^2 \psi}{\partial \eta_2^2}
    \right]
        &=&
L^2 \epsilon^2
    \left[
F_1^{\epsilon}
    \frac{\partial F_1^{\epsilon}}{\partial \eta_1}
    +
F_2^{\epsilon}
    \frac{\partial F_1^{\epsilon}}{\partial \eta_2}
    \right].
\end{eqnarray*}
\noindent
Similarly, by the second identity in Eq. (\ref{eq:2.41})
and the definition of $F_1^{\epsilon}$, the right side of the same
equation is:
\begin{eqnarray*}
       c_3\hspace{2pt}
      \frac{\partial}{\partial \tau} 
  \left[
      \sigma^{\frac{19}{25}}
      \frac{\partial }{\partial \tau}
      \left(
      \frac{1}{\rho^{\epsilon}}
      \frac{\partial \psi}{\partial \tau}
      \right)
      \right]
      &=&
      c_3 \hspace{2pt}
    \frac{\partial \eta_2}{\partial \tau}
    \hspace{2pt}
    \frac{\partial}{\partial \eta_2} 
  \left[
      \sigma^{\frac{19}{25}}
      \hspace{2pt}
      \rho^{\epsilon}
      \hspace{2pt}
\frac{\partial^2 \psi}{\partial \eta_2^2}
      \right],
       \\
    &=&
    c_3\hspace{2pt}
\rho^{\epsilon}\hspace{2pt}
    \frac{\partial}{\partial \eta_2} 
  \left[
      \sigma^{\frac{19}{25}}
      \hspace{2pt}
\rho^{\epsilon}
             \hspace{2pt}
\frac{\partial^2 \psi}{\partial \eta_2^2}
      \right], \\
      &=&
      c_3\hspace{2pt}
c_1^2
\hspace{2pt}
\sigma_0^{\frac{2b}{(b-1)}}
\hspace{2pt}
\tilde{\sigma}^{-1}
\frac{\partial}{\partial \eta_2}
\left[\sigma^{\frac{19}{25}-1}
\hspace{2pt}
\frac{\partial F_1^{\epsilon}}{\partial \eta_2}\right].     
\end{eqnarray*}
Therefore, the vector field $F^{\epsilon}\in
L^2\left(\boldsymbol{\Omega}; \mathbb{R}^2\right)\cap
L^1_{loc}\left(\boldsymbol{\Omega}; \mathbb{R}^2\right)$,
and its locally integrable weak partial derivatives, satisfy the
incompressible system of Eq. (\ref{eq:2.36}) and 
(\ref{eq:2.37}) with boundary conditions given 
by Eq. (\ref{eq:2.38}).
\end{proof}

\subsection{Dorodnitzyn Boundary Layer Limit Formula}
\label{subsec:2.3}

Alberto Bressan's \cite{Bressan13} book 
\emph{Lecture Notes on Functional Analysis: 
With Applications to Linear Partial Differential Equations} 
provides an excellent account of Sobolev 
Embeedding Theorems, as they will be used in this section.

\begin{theorem}\label{theo:2.3}
Under the same hypothesis of Theorem~\ref{theo:2.2}, there is an
estimate: 
\begin{eqnarray}\label{eq:2.42}
 \Vert \nabla F^{\epsilon} 
 \Vert_{L^2\left(\boldsymbol{\Omega};\mathbb{R}^2\right)}
\leq
\frac{c_2\hspace{2pt}U^3}{2\hspace{2pt}C}
\end{eqnarray}
\end{theorem}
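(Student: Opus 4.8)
The plan is to run Majda's energy method on the momentum balance (\ref{eq:2.37}), testing it against the horizontal component $F_1^{\epsilon}$ and exploiting both the incompressibility (\ref{eq:2.36}) and the free-stream boundary law (\ref{eq:2.38}). First I would pair (\ref{eq:2.37}) in $L^2(\boldsymbol{\Omega})$ with $\tilde{\sigma}\,F_1^{\epsilon}$, the weight $\tilde{\sigma}$ being inserted precisely to absorb the factor $\tilde{\sigma}^{-1}$ in front of the viscous term, and integrate over $\boldsymbol{\Omega}$. A decisive simplification is the standing hypothesis $\partial u/\partial x=0$ carried over from Theorem~\ref{theo:2.2} (the \emph{negligible variation of $u$ parallel to the surface} of the abstract): under the change of variables of Lemma~\ref{lem2.4} it makes $F_1^{\epsilon}$ depend, to leading order, on $\eta_2$ alone, so that $\partial F_1^{\epsilon}/\partial\eta_1$ drops out and, through $\partial F_1^{\epsilon}/\partial\eta_1=-\partial F_2^{\epsilon}/\partial\eta_2$ and the stream-function representation $F_1^{\epsilon}=\partial\psi/\partial\eta_2$, $F_2^{\epsilon}=-\partial\psi/\partial\eta_1$, the whole norm $\Vert\nabla F^{\epsilon}\Vert_{L^2}$ is governed, to leading order, by the single transverse derivative $\partial F_1^{\epsilon}/\partial\eta_2$.

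Next I would treat the two sides separately. On the viscous side, one integration by parts in $\eta_2$ yields the coercive interior term $-C\int_{\boldsymbol{\Omega}}\tilde{\sigma}^{-6/25}(\partial F_1^{\epsilon}/\partial\eta_2)^2$ together with a boundary flux carrying the free-stream value of $F_1^{\epsilon}$; since Remark~\ref{rem2.1} gives $0<\tilde{\sigma}<1$, one has $\tilde{\sigma}^{-6/25}\ge 1$, so this interior term bounds $\Vert\partial F_1^{\epsilon}/\partial\eta_2\Vert_{L^2}^2$ from below and, by the reduction above, controls $\Vert\nabla F^{\epsilon}\Vert_{L^2}^2$. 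On the convective side, the identity $F_1^{\epsilon}(F^{\epsilon}\cdot\nabla)F_1^{\epsilon}=(F^{\epsilon}\cdot\nabla)((F_1^{\epsilon})^2/2)$ together with the divergence-free condition (\ref{eq:2.36}) turns the nonlinearity into a pure boundary integral, which the boundary law (\ref{eq:2.38}) (namely $F^{\epsilon}=(u^{\epsilon},0)$ on $\partial\boldsymbol{\Omega}$, with $u^{\epsilon}$ attaining its free-stream value on the top cover) evaluates in terms of $U$; crucially this piece carries the prefactor $L^2\epsilon^2$, so it stays uniformly bounded --- indeed negligible --- as $\epsilon\to0$, which is exactly why the final estimate is scale-free. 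Collecting both sides, using the density expression of Lemma~\ref{lem2.3} and Corollary~\ref{cor2.2} to bring in the constants $c_2$ and $\sigma_0$, and estimating the surviving boundary terms by Cauchy--Schwarz --- as in the $L^2$-membership argument already used in Theorem~\ref{theo:2.2} --- then isolates the Dirichlet energy and produces a bound of the form (\ref{eq:2.42}).

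The hard part will be the bookkeeping on the boundary. The domain $\boldsymbol{\Omega}=\boldsymbol{\eta}(\boldsymbol{\Omega_{\epsilon}})$ produced by Lemma~\ref{lem2.4} is not a rectangle, and its boundary mixes three regimes --- the free-stream condition on the image of $\Gamma_h$, the no-slip condition on the image of $\Gamma_0$, and the periodicity (\ref{eq:2.32}) on the images of $\Lambda_0$ and $\Lambda_L$ --- so one must check that the periodic pieces cancel, that the no-slip piece vanishes because $F_1^{\epsilon}=0$ there, and that only the free-stream part survives, all while carrying the non-constant weight $\tilde{\sigma}$ and the transformed outward normal correctly. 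Pinning down the exact numerical constant $c_2U^3/(2C)$ --- and in particular obtaining the first (rather than squared) power of the norm --- is where the precise grouping of the convective flux with the viscous boundary term, and the precise form of the Cauchy--Schwarz step, must be gotten exactly right. A secondary technical point is to justify rigorously, at the level of the weak $L^1_{loc}$ derivatives assumed in Definition~\ref{defi1.2}, both the integrations by parts and the reduction of $\Vert\nabla F^{\epsilon}\Vert_{L^2}$ to $\Vert\partial F_1^{\epsilon}/\partial\eta_2\Vert_{L^2}$.
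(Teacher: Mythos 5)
Your outline is essentially the paper's own strategy: a Majda-type energy estimate in which the momentum balance (\ref{eq:2.37}) is paired with the flow field, incompressibility (\ref{eq:2.36}) and the Gauss--Ostrogradsky theorem turn the convective term into boundary fluxes, the no-slip, periodic and free-stream pieces of $\partial\boldsymbol{\Omega}$ are checked one by one, and the surviving free-stream flux, evaluated with the density bound $\rho^{\epsilon}\le c_2$, produces the constant. Two steps, however, have genuine gaps as written.

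First, the weighted test function $\tilde{\sigma}F_1^{\epsilon}$ is inconsistent with the identity you invoke for the nonlinearity. After inserting the weight, the convective side is $L^2\epsilon^2\int_{\boldsymbol{\Omega}}\tilde{\sigma}\,(F^{\epsilon}\cdot\nabla)\left((F_1^{\epsilon})^2/2\right)d\boldsymbol{\eta}$, and (\ref{eq:2.36}) plus the divergence theorem leaves the interior remainder $-L^2\epsilon^2\int_{\boldsymbol{\Omega}}\left((F_1^{\epsilon})^2/2\right)(F^{\epsilon}\cdot\nabla)\tilde{\sigma}\,d\boldsymbol{\eta}$, which your plan never treats: a \emph{weighted} transport term is not turned into a pure boundary integral by incompressibility alone. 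The repair is to notice that $\tilde{\sigma}$ is a function of $F_1^{\epsilon}$ alone, namely $\tilde{\sigma}=1-[LF_1^{\epsilon}]^2/2i_0$ by (\ref{eq:2.39}) and Lemma~\ref{lem2.3}, so that $\tilde{\sigma}F_1^{\epsilon}(F^{\epsilon}\cdot\nabla)F_1^{\epsilon}=(F^{\epsilon}\cdot\nabla)G(F_1^{\epsilon})$ with $G(z)=z^2/2-L^2z^4/8i_0$, after which the divergence theorem does apply; alternatively, test with $F^{\epsilon}$ itself, as the paper does, and deal with the leftover factor $\tilde{\sigma}^{-1}$ on the viscous side (the paper uses $\tilde{\sigma}^{-1}>1$ there).

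Second, ``to leading order'' cannot yield (\ref{eq:2.42}), which is an exact bound on the \emph{full} norm $\Vert\nabla F^{\epsilon}\Vert_{L^2\left(\boldsymbol{\Omega};\mathbb{R}^2\right)}$. You must show the remaining gradient entries vanish exactly: $\partial u/\partial x=0$ gives $\partial F_1^{\epsilon}/\partial\eta_1=0$, and then (\ref{eq:2.36}) gives $\partial F_2^{\epsilon}/\partial\eta_2=0$, but nothing in your outline controls $\partial F_2^{\epsilon}/\partial\eta_1$. The paper closes this with the additional structural facts (\ref{eq:2.43})--(\ref{eq:2.45}): $\Delta F_2^{\epsilon}=0$ together with $F_2^{\epsilon}|_{\partial\boldsymbol{\Omega}}=0$ forces $\int_{\boldsymbol{\Omega}}|\nabla F_2^{\epsilon}|^2\,d\boldsymbol{\eta}=0$ after integration by parts; equivalently, since $F_2^{\epsilon}$ depends only on $\eta_1$ and vanishes on $\boldsymbol{\eta}\left(\phi^{\epsilon}\left(\Gamma_0\right)\right)$, one gets $F_2^{\epsilon}\equiv0$. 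A last, smaller point: the constant is not obtained by Cauchy--Schwarz. The paper evaluates the free-stream flux directly, using $[LU^{\epsilon}]^3=U^3$ and $\int_0^1\left(\partial\eta_1/\partial s\right)ds\le c_2$, and this direct evaluation is what produces the first power $c_2U^3/2C$; a Cauchy--Schwarz step on that boundary integral would not give the stated constant.
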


\begin{proof}
From Theorem~\ref{theo:2.2}, the vector field 
$F^{\epsilon}=(F_1^{\epsilon},F_2^{\epsilon})$
verifies the system of Eq. (\ref{eq:2.36}) and (\ref{eq:2.37}) in
$\boldsymbol{\Omega}$ 
with boundary conditions determined by (\ref{eq:2.38}).
In particular, there is an underlying assumption that the Laplacian
    \begin{eqnarray}\label{eq:2.43}
    \Delta F_2^{\epsilon} &=&
    \sum_{i=1,2} 
\frac{\partial^2 F_2^{\epsilon}}{\partial \eta_i^2}=0,
    \end{eqnarray}
because the conservation of momentum equation for $F_2^{\epsilon}$ 
is considered null. Furthermore, from $\partial u/\partial x=0$
\emph{a.e.} in $\boldsymbol{\Omega_h}$ and Eq. (\ref{eq:2.39}),
it can be seen that:
 \begin{eqnarray}\label{eq:2.44}
\frac{\partial^2 F_1^{\epsilon}}{\partial \eta_1^2}&=&0.
    \end{eqnarray}
    
Let $\mathcal{F}_1 \left(F^{\epsilon}\right)$ denote
the inner product in
$L^2\left(\boldsymbol{\Omega};\mathbb{R}^2\right)$ of 
$F^{\epsilon}$ and the vector
$\left(F^{\epsilon}\cdot \nabla \right) F^{\epsilon}=
\left(\sum_{i=1,2}F_i^{\epsilon}\hspace{2pt} 
\frac{\partial F_j^{\epsilon}}{\partial \eta_i}\right)_{j=1,2}$
in the space $L^2\left(\boldsymbol{\Omega};\mathbb{R}^2\right)$.
Namely,
\begin{eqnarray*}
\mathcal{F}_1 \left(F^{\epsilon} \right) 
&=&
\frac{1}{2}
\int \!\!\! \int_{\boldsymbol{\Omega}}   
\sum_{i=1,2} F_i^{\epsilon} 
\left(\sum_{j=1,2} \frac{\partial \left(F_j^{\epsilon}\right)^2}{\partial \eta_i}\right) 
\hspace{2pt} d\boldsymbol{\eta}.
\end{eqnarray*}

From Eq. (\ref{eq:2.39}) and the boundary conditions 
(\ref{eq:2.30}), (\ref{eq:2.31}), and (\ref{eq:2.32}) 
for $v^{\epsilon}$, we have:
\begin{eqnarray}\label{eq:2.45}
    F_2^{\epsilon}|_{\partial \boldsymbol{\Omega}}&=&0.
    \end{eqnarray}
If $div\hspace{2pt}(F_1^{\epsilon},F_2^{\epsilon})=0$,
by the Gauss-Ostrogradsky Theorem and Eq. (\ref{eq:2.45}):
\begin{eqnarray*}
\mathcal{F}_1 \left(F^{\epsilon}\right)
&=&
-\frac{1}{2}
\int \!\!\! \int_{\boldsymbol{\Omega}}   
\left\{\left(F_1^{\epsilon} \right)^2 + \left(F_1^{\epsilon} \right)^2\right\}
div\left(F_1^{\epsilon},F_2^{\epsilon}\right)
\hspace{2pt} d\boldsymbol{\eta}\\
&  &
+\frac{1}{2}
\int_{\partial \boldsymbol{\Omega}}   
\left(\left(F_1^{\epsilon}\right)^2,0\right)
\hspace{2pt}\cdot \hspace{2pt}
\mathbf{n}
\hspace{2pt} dS,
\\
&=&
\frac{1}{2}
\int_{\partial \boldsymbol{\Omega}}   
\left(\left(F_1^{\epsilon}\right)^3,0\right)
\hspace{2pt}\cdot \hspace{2pt}
\mathbf{n}
\hspace{2pt} dS,
\end{eqnarray*}
where $\mathbf{n}$ is the outward pointing unitary normal 
vector field of the topolo\-gical boundary 
$\partial \boldsymbol{\Omega}$. 
Because $\boldsymbol{\eta}$ is a diffeomorphism,
$\boldsymbol{\eta}\left(
\partial \boldsymbol{\Omega_{\epsilon}}\right)
=
\partial \boldsymbol{\Omega}$.
This is, $\partial \boldsymbol{\Omega}=
\boldsymbol{\eta}\left(\phi^{\epsilon}\left(\Gamma_0\right)\right)
\cup
\boldsymbol{\eta}\left(\phi^{\epsilon}\left(\Lambda_0\right)\right)
\cup
\boldsymbol{\eta}\left(\phi^{\epsilon}\left(\Lambda_L\right)\right)
\cup
\boldsymbol{\eta}\left(\phi^{\epsilon}\left(\Gamma_h\right)\right)$.
The no-slip boundary condition for $F_1^{\epsilon}$ in
$\boldsymbol{\eta}\left(\phi^{\epsilon}\left(\Gamma_0\right)\right)$
is inherited from $u^{\epsilon}$ by Eq. (\ref{eq:2.30}). This way,
\begin{eqnarray*}
  \int_{\boldsymbol{\eta}\left(\phi^{\epsilon}\left(\Gamma_0\right)\right)}   
\left(\left(F_1^{\epsilon}\right)^3,0\right)
\hspace{2pt}\cdot \hspace{2pt}
\mathbf{n}
\hspace{2pt} dS
&=&
0.
\end{eqnarray*}
The periodic boundary conditions of $u^{\epsilon}$ establi\-shed 
in Eq. (\ref{eq:2.32}) imply that 
$\forall \tau \in [0,\delta/L\epsilon]$, or for all
$(1,\tau) \in \boldsymbol{\eta}\left(\phi^{\epsilon}
\left(\Lambda_L\right)\right)$:
\begin{eqnarray*}
\eta_1\left(1,\tau\right)
&=&
c_2^{-1}\sigma_0^{-b/(b-1)}\left(1-
\left[\left[u^{\epsilon}\left(1,\tau\right)\right]^2-
\left[u^{\epsilon}\left(0,\tau\right)\right]^2\right]
\right)\\
&=& c_2^{-1}\sigma_0^{-b/(b-1)}.
\end{eqnarray*}
In addition, $\eta_1(0,\tau)=0$ 
$\forall \tau \in [0,\delta/L\epsilon]$,
\emph{i.e} $\forall (0,\tau) \in
\boldsymbol{\eta}\left(\phi^{\epsilon}\left(\Lambda_0\right)
\right)$. 
Thus, the partial derivatives 
$\partial \eta_1/\partial \tau \hspace{2pt}(0,\tau)=
\partial \eta_1/\partial \tau \hspace{2pt}(1,\tau)=0$ 
$\forall \tau \in [0,\delta/L\epsilon]$,
and the boundary's sections
$\boldsymbol{\eta}\left(\phi^{\epsilon}\left(\Lambda_0\right)
\right)$
and
$\boldsymbol{\eta}\left(\phi^{\epsilon}\left(\Lambda_L\right)
\right)$
are vertical.
Consequently, Eq. (\ref{eq:2.39}) implies that:
\begin{eqnarray*}
  \int_{\boldsymbol{\eta}\left(\phi^{\epsilon}\left(\Lambda_0\right)\right)}   
\left(\left(F_1^{\epsilon}\right)^3,0\right)
\hspace{2pt}\cdot \hspace{2pt}
\mathbf{n}
\hspace{2pt} dS
&=&
-\int_{0}^{\frac{\delta}{L\epsilon}}   
\left[u^{\epsilon}\left(0,\tau\right) \right]^3 \frac{\partial \eta_1}{\partial \tau}\left(0,\tau\right)
\hspace{2pt} d\tau
=0.
\end{eqnarray*}
Similarly,
\begin{eqnarray*}
\int_{\boldsymbol{\eta}\left(\phi^{\epsilon}\left(\Lambda_L\right)\right)}   
\left(\left(F_1^{\epsilon}\right)^3,0\right)
\hspace{2pt}\cdot \hspace{2pt}
\mathbf{n}
\hspace{2pt} dS
&=&
0.
\end{eqnarray*}
As a result,
the product $\mathcal{F}_1 \left(F^{\epsilon}\right)$
is determined only by the free-stream velocity:
\begin{eqnarray}\label{eq:2.46}
  \mathcal{F}_1 \left(F^{\epsilon}\right)
&=&
\frac{1}{2}
\hspace{2pt}
\int_{\boldsymbol{\eta}\left(\phi^{\epsilon}\left(\Gamma_h\right)
\right)}   
\left([-LU^{\epsilon}]^3,0\right)
\hspace{2pt}\cdot \hspace{2pt}
\mathbf{n}
\hspace{2pt} dS.
\end{eqnarray}

Let $\mathcal{F}_2 \left(F^{\epsilon}\right)$ designate the product
of $F^{\epsilon}$ and the vector corres\-ponding to the right side
of Eq. (\ref{eq:2.37}) in the space 
$L^2\left(\boldsymbol{\Omega};\mathbb{R}^2\right)$:
\begin{eqnarray*}
\mathcal{F}_2 \left(F^{\epsilon}\right)
&=&
\int \!\!\! \int_{\boldsymbol{\Omega}}
\left(F_1^{\epsilon},F_2^{\epsilon}\right)
\cdot
\left(
C
\tilde{\sigma}^{-1}
\frac{\partial}{\partial \eta_2}
\left[\tilde{\sigma}^{-\frac{6}{25}}
\frac{\partial F_1^{\epsilon}}{\partial \eta_2}\right],
\hspace{2pt}
0\right)
\hspace{2pt}
d\boldsymbol{\eta},
\\
&=&
C
\int \!\!\! \int_{\boldsymbol{\Omega}}
F_1^{\epsilon}
\hspace{2pt}
\tilde{\sigma}^{-1}
\frac{\partial}{\partial \eta_2}
\left[\tilde{\sigma}^{-\frac{6}{25}}
\frac{\partial F_1^{\epsilon}}{\partial \eta_2}\right]
\hspace{2pt}
d\boldsymbol{\eta}.
\end{eqnarray*}
In fact, $\tilde{\sigma}^{-1}>1$ in $\boldsymbol{\bar{\Omega}}$.
Then, by the Gauss-Ostrogradsky Theorem, Eq. (\ref{eq:2.39}), 
and the boundary conditions (\ref{eq:2.30}), (\ref{eq:2.31}) 
and (\ref{eq:2.32}), we have:
\begin{eqnarray*}
\mathcal{F}_2 \left(F^{\epsilon}\right)
&\geq&
C \hspace{2pt} 
\int \!\!\! \int_{\boldsymbol{\Omega}}
F_1^{\epsilon}
\hspace{2pt}
\frac{\partial}{\partial \eta_2}
\left[\tilde{\sigma}^{-\frac{6}{25}}
\frac{\partial F_1^{\epsilon}}{\partial \eta_2}\right]
\hspace{2pt}
d\boldsymbol{\eta},\\
&\geq&
-C 
\left[
\int \!\!\! \int_{\boldsymbol{\Omega}}
\left(
\frac{\partial F_1^{\epsilon}}{\partial \eta_2}\right)^2
\hspace{2pt}
d\boldsymbol{\eta}
+
\frac{1}{2}
\hspace{2pt}
\int_{\partial \boldsymbol{\Omega}} 
\left(0,
\hspace{2pt}
\frac{\partial \left(F_1^{\epsilon}\right)^2}{\partial \eta_2}
\right)
\cdot
\boldsymbol{n}
\hspace{2pt} dS
\right] \\
&=&
-C 
\hspace{2pt}
\int \!\!\! \int_{\boldsymbol{\Omega}}
\left(
\frac{\partial F_1^{\epsilon}}{\partial \eta_2}\right)^2
\hspace{2pt}
d\boldsymbol{\eta}.
\end{eqnarray*}
This is because the restriction of $F_1^{\epsilon}$ to 
$\boldsymbol{\eta}\left(\phi^{\epsilon}\left(\Gamma_h\right)\right)$
is constant, the deri\-vative $\partial F_1^{\epsilon}/\partial
\eta_2|_{\boldsymbol{\eta}\left(\phi^{\epsilon}\left(\Gamma_h\right)
\right)}=0$,
and the periodic boundary condition (\ref{eq:2.32}) makes vertical
the sections $\boldsymbol{\eta}\left(\phi^{\epsilon}\left(\Lambda_0
\right)\right)$
and $\boldsymbol{\eta}\left(\phi^{\epsilon}\left(\Lambda_L\right)
\right)$,
so that the normal 
$\mathbf{n}|_{\boldsymbol{\eta}\left(\phi^{\epsilon}\left(\Lambda_0
\right)\right)
\cup 
\boldsymbol{\eta}\left(\phi^{\epsilon}\left(\Lambda_L\right)\right)}=(\pm 1,0)$.

In similar fashion, given Eq. (\ref{eq:2.44}) and  
$\partial F_1^{\epsilon}/\partial 
\eta_1|_{\boldsymbol{\eta}\left(\phi^{\epsilon}\left(\Gamma_h\right)
\right)}=0$:
\begin{eqnarray*}
  \int \!\!\! \int_{\boldsymbol{\Omega}}
\left(
\frac{\partial F_1^{\epsilon}}{\partial \eta_1}\right)^2
\hspace{2pt}
d\boldsymbol{\eta}
&=&
-\int \!\!\! \int_{\boldsymbol{\Omega}}
F_1^{\epsilon}
\frac{\partial^2 F_1^{\epsilon}}{\partial \eta_1^2}
\hspace{2pt}
d\boldsymbol{\eta}\\
&  &
+
\int_{\boldsymbol{\eta}\left(\phi^{\epsilon}\left(\Gamma_h\right)\right)}  
\left(F_1^{\epsilon}
\frac{\partial F_1^{\epsilon}}{\partial \eta_1},0\right)
\hspace{2pt}\cdot \hspace{2pt}
\mathbf{n}
\hspace{2pt} dS
=0.
\end{eqnarray*}
And, in the same way, Eq. (\ref{eq:2.43}) and (\ref{eq:2.45}) 
imply that:
\begin{eqnarray*}
\sum_{i=1,2}
  \int \!\!\! \int_{\boldsymbol{\Omega}}
\left(
\frac{\partial F_2^{\epsilon}}{\partial \eta_i}\right)^2
\hspace{2pt}
d\boldsymbol{\eta}
&=&
-\int \!\!\! \int_{\boldsymbol{\Omega}}
F_2^{\epsilon}
\Delta F_2
\hspace{2pt}
d\boldsymbol{\eta}
=0.
\end{eqnarray*}

Therefore, if Eq.  (\ref{eq:2.37}) is satisfied by $F^{\epsilon}$,
then $\mathcal{F}_1 \left(F^{\epsilon}\right)=\mathcal{F}_2 
\left(F^{\epsilon}\right)$,
and Eq. (\ref{eq:2.46}) gives:
\begin{eqnarray*}
\Vert \nabla F^{\epsilon} 
 \Vert_{L^2\left(\boldsymbol{\Omega};\mathbb{R}^2\right)}
 &=&
\int \!\!\! \int_{\boldsymbol{\Omega}}
\left(
\frac{\partial F_1^{\epsilon}}{\partial \eta_2}\right)^2
\hspace{2pt}
d\boldsymbol{\eta} \\
&\leq&
\frac{1}{2\hspace{2pt}C}
\hspace{2pt}
\int_{\boldsymbol{\eta}\left(\phi^{\epsilon}\left(\Gamma_h\right)\right)}   
\left([LU^{\epsilon}]^3,0\right)
\hspace{2pt}\cdot \hspace{2pt}
\mathbf{n}
\hspace{2pt} dS.
\end{eqnarray*}
Finally, each density value $\rho=\rho^{\epsilon}\leq c_2$,
and $\partial \eta_1/\partial s=\rho^{\epsilon}$ in
$\boldsymbol{\Omega}$.
Hence, 
\begin{eqnarray*}
\frac{1}{2\hspace{2pt}C}
\hspace{2pt}
\int_{\boldsymbol{\eta}\left(\phi^{\epsilon}\left(\Gamma_h\right)\right)}   
\left([LU^{\epsilon}]^3,0\right)
\hspace{2pt}\cdot \hspace{2pt}
\mathbf{n}
\hspace{2pt} dS
&\leq&
\frac{U^3}{2\hspace{2pt}C}
\int_{0}^{1}   
\frac{\partial \eta_1}{\partial s}\left(s,h^{\epsilon}\left(s\right)\right) 
\hspace{2pt} ds
\\
&\leq&
\frac{c_2\hspace{2pt}U^3}{2\hspace{2pt}C}.
\end{eqnarray*}
\end{proof}

\begin{theorem}\label{theo:2.4}
Without loss of generality, assume $L,H >1$.
Under the same hypothesis of Theorem~\ref{theo:2.2}, 
and the additional existence of locally integrable generalized 
derivatives up to order $2$ for $u$,
we obtain that $u$ is a weak solution to the limit formula:
\begin{eqnarray}\label{eq:2.47}
   f
    \hspace{2pt}
    \frac{\partial^2 u}{\partial y^2}
&=&
\frac{\partial f}{\partial y}
\hspace{2pt}
\frac{\partial u}{\partial y},
\end{eqnarray}
 in $L^2\left(\boldsymbol{\Omega_h};\mathbb{R}^2\right)$, 
 where 
$f=\left[1-\left( 
      u^2\left(x,y\right)/2i_0\right)\right]^{-\frac{6}{25}}
      $.
\end{theorem}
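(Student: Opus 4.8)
The plan is to turn the $\epsilon$-uniform estimate of Theorem~\ref{theo:2.3} into a compactness statement, pass to the limit $\epsilon\to 0$ in the incompressible momentum equation~(\ref{eq:2.37}), and then invert the changes of variables $\phi^{\epsilon}$ and $\boldsymbol{\eta}$ so as to read the surviving identity in the physical coordinates $(x,y)$. The guiding observation is that the hypothesis $\partial u/\partial x=0$ inherited from Theorem~\ref{theo:2.2} is the structural fact that annihilates the convective term, whereas the uniform bound together with Rellich--Kondrachov merely supplies the limit field in which the formula~(\ref{eq:2.47}) is to be read weakly.

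First I would produce an $\epsilon$-uniform $H^1$ bound for the family $\{F^{\epsilon}\}$. Theorem~\ref{theo:2.3} already gives $\Vert\nabla F^{\epsilon}\Vert_{L^2(\boldsymbol{\Omega};\mathbb{R}^2)}\le c_2U^3/(2C)$, which does not see $\epsilon$; since the free-stream datum $F_1^{\epsilon}=-LU^{\epsilon}=-U$ on $\boldsymbol{\eta}(\phi^{\epsilon}(\Gamma_h))$ is likewise $\epsilon$-independent, a Poincar\'e inequality controls $\Vert F^{\epsilon}\Vert_{L^2}$ uniformly, placing the family in a fixed ball of $H^1$. Because the domain $\boldsymbol{\Omega}=\boldsymbol{\eta}(\boldsymbol{\Omega_{\epsilon}})$ itself moves with $\epsilon$, I would first transplant each $F^{\epsilon}$ to a common reference domain (or extend it by the constant free-stream value) before applying the Rellich--Kondrachov theorem quoted above; this yields a subsequence $F^{\epsilon_k}$ converging strongly in $L^2$ and weakly in $H^1$ to a limit $F=(F_1,F_2)$.

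Next I would pass to the limit in the weak form of~(\ref{eq:2.37}) tested against $\varphi\in C_c^{\infty}$. After transferring one $\eta_2$-derivative onto $\varphi$, the right-hand side depends linearly on $\tilde{\sigma}^{-6/25}\,\partial F_1^{\epsilon}/\partial\eta_2$ paired against the bounded weight $\tilde{\sigma}^{-1}$, so weak $H^1$ convergence carries it to the corresponding expression in $F_1$. For the nonlinear left-hand side I would invoke $\partial u/\partial x=0$: it forces $\partial u^{\epsilon}/\partial s=0$ and, through the continuity equation~(\ref{eq:2.28}) together with the no-slip condition~(\ref{eq:2.30}), also $v^{\epsilon}\equiv 0$; hence $F_1^{\epsilon}\,\partial_{\eta_1}F_1^{\epsilon}+F_2^{\epsilon}\,\partial_{\eta_2}F_1^{\epsilon}$ contributes nothing, and the bounded prefactor $L^2\epsilon^2$ cannot revive it. The surviving identity is the weak form of
\[
 \frac{\partial}{\partial\eta_2}\!\left[\tilde{\sigma}^{-6/25}\frac{\partial F_1}{\partial\eta_2}\right]=0 \qquad\mbox{in }\boldsymbol{\Omega}.
\]

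Finally I would transport this reduced equation back to $\boldsymbol{\Omega_h}$. Lemma~\ref{lem2.4} provides $\partial\eta_2/\partial\tau=\rho^{\epsilon}$ and the identification $F_1=u^{\epsilon}$ of~(\ref{eq:2.39}); composing with $\phi^{\epsilon}$ of Lemma~\ref{lem2.3}, where $y=L\epsilon\tau$ and $u^{\epsilon}=u/L$, the operator $\partial/\partial\eta_2$ pulls back to a strictly positive scalar multiple of $\partial/\partial y$ and the weight $\tilde{\sigma}^{-6/25}$ pulls back to $f=[1-(u^2/2i_0)]^{-6/25}$, since $\sigma=1-u^2/2i_0$. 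Discarding the nonvanishing scalar factors and expanding the divergence then produces~(\ref{eq:2.47}), read weakly in $L^2(\boldsymbol{\Omega_h})$, the assumed second-order locally integrable weak derivatives of $u$ being exactly what license the passage to the nondivergence form displayed there. I expect the principal difficulty to be neither the vanishing of the convective term nor the compactness, but the faithful transport of the weak formulation through the two successive nonlinear changes of variables on an $\epsilon$-dependent domain: one must check that the Jacobian weights $\rho^{\epsilon}$ and the transformed test functions combine consistently, so that the pulled-back equation is genuinely the weak form of~(\ref{eq:2.47}) rather than a neighbouring relation differing by a $\sigma$-power.
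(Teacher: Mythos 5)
Your overall architecture is the same as the paper's: the $\epsilon$-uniform $W^{1,2}$ bound from Theorem~\ref{theo:2.3}, Rellich--Kondrachov compactness, and the identification $F_1^{\epsilon}=u^{\epsilon}$ of (\ref{eq:2.39}) to read the limit as a statement about $u$. Two of your refinements are genuine improvements on the paper's write-up: you observe that under $\partial u/\partial x=0$ the continuity equation (\ref{eq:2.28}) and the no-slip condition (\ref{eq:2.30}) force $v^{\epsilon}\equiv 0$, so that the convective term vanishes identically for every $\epsilon$ (the paper only lets it die through the prefactor $L^{2}\epsilon^{2}$), and you treat explicitly the $\epsilon$-dependence of $\boldsymbol{\Omega}$ and the passage to the limit in a weak formulation, both of which the paper compresses into a single concluding sentence.

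However, your final step contains a concrete error, and it is exactly the one you flagged as the ``principal difficulty''. The operator $\partial/\partial\eta_2$ does \emph{not} pull back to a positive \emph{scalar} multiple of $\partial/\partial y$: by Lemma~\ref{lem2.4}, $\partial/\partial\eta_2=(\rho^{\epsilon})^{-1}\,\partial/\partial\tau$, and $(\rho^{\epsilon})^{-1}=c_2^{-1}\sigma_0^{-b/(b-1)}\,\sigma$ is a non-constant function of $u$. The outer occurrence of this factor may indeed be discarded, because the equation is homogeneous; the inner occurrence cannot. Concretely, $\tilde{\sigma}^{-6/25}\,\partial F_1^{\epsilon}/\partial\eta_2$ pulls back to a constant times $\sigma^{-6/25}\cdot\sigma\cdot\partial u^{\epsilon}/\partial\tau=\sigma^{19/25}\,\partial u^{\epsilon}/\partial\tau$, so the pulled-back equation is $\partial/\partial y\left[\sigma^{19/25}\,\partial u/\partial y\right]=0$ --- constancy of the shear stress $\mu(u)\,\partial u/\partial y$, as one also sees directly from (\ref{eq:1.2}) once pressure gradient and convection are gone --- which is (\ref{eq:2.47}) with $f=\sigma^{-19/25}$, not with the stated $f=\sigma^{-6/25}$. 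Carried out faithfully, your argument therefore proves a formula differing from the theorem's by precisely a power of $\sigma$, the ``neighbouring relation'' you warned against. (The paper itself never performs this transport: its proof stops after Rellich--Kondrachov and asserts the conclusion, and the exponent $-6/25$ it quotes is the weight native to the $\eta$-coordinates of (\ref{eq:2.37}) transplanted to $(x,y)$ without the Jacobian weights; so the discrepancy is inherited from the paper, but any complete proof must either state the formula in $\boldsymbol{\Omega}$-coordinates or correct the exponent in $f$.)
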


\begin{proof}
If $\mathbf{v}=(u,v) \in 
L^2\left(\boldsymbol{\Omega_h};\mathbb{R}^2\right)$,
and $L^2\epsilon=LH>1$:
\begin{eqnarray*}
\int \! \! \! \int_{\boldsymbol{\Omega}}
\left(F_1^{\epsilon}\left(\eta_1,\eta_2\right)\right)^2
\hspace{2pt} d\eta_1 d\eta_2
&=&
\int \! \! \! \int_{\boldsymbol{\Omega_{\epsilon}}}
\left(u^{\epsilon}\left(s,\tau \right)\right)^2
\hspace{2pt} ds \hspace{2pt} d\tau, \\
&=&
LH
\int \! \! \! \int_{\boldsymbol{\Omega_{h}}}
\left(u\left(x,y\right)\right)^2
\hspace{2pt} dx \hspace{2pt} dy, \\
&=& 
LH 
\hspace{2pt}
\Vert u\Vert_{L^2\left(\boldsymbol{\Omega_h}\right)}^2.
\end{eqnarray*}
In a similar manner, the estimate $\rho^{\epsilon}\leq c_2$ 
implies that:
\begin{eqnarray*}
\int \! \! \! \int_{\boldsymbol{\Omega}}
\left(F_2^{\epsilon}\left(\eta_1,\eta_2\right)\right)^2
\hspace{2pt} d\eta_1 d\eta_2
&=&
\int \! \! \! \int_{\boldsymbol{\Omega_{\epsilon}}}
\left(\left(\rho^{\epsilon}\right)^2
v^{\epsilon}\left(s,\tau \right)\right)^2
\hspace{2pt} ds \hspace{2pt} d\tau, \\
&\leq&
c_2^4
\hspace{2pt}
LH
\hspace{2pt}
\Vert v\Vert_{L^2\left(\boldsymbol{\Omega_h}\right)}^2.
\end{eqnarray*}

Therefore, 
\begin{eqnarray*}
\Vert F^{\epsilon}
\Vert_{W^{1,2}\left(\boldsymbol{\Omega};\mathbb{R}^2\right)}^2
&\leq&
L
\left\{
H 
\hspace{2pt}
\Vert u\Vert_{L^2\left(\boldsymbol{\Omega_h}\right)}^2
+
c_2^4
\hspace{2pt}
H
\hspace{2pt}
\Vert v\Vert_{L^2\left(\boldsymbol{\Omega_h}\right)}^2
+
\frac{c_2\hspace{2pt}U^3}{2\hspace{2pt}C}
\right\}.
\end{eqnarray*}
Thus, the sequence $\left(F^{\epsilon}\right)$
is contained and bounded in the Sobolev Space $W^{1,2}\left(\boldsymbol{\Omega};\mathbb{R}^2\right)$
by a constant value independent of the parameter $\epsilon>0$.
As a consequence, the Rellich-Kondrachov compactness theorem
A. Bressan \cite[p. 173, 178]{Bressan13} implies that it
has a subsequence $\left(F^{\epsilon_{\alpha}}\right)$  that
converges strongly in
$L^2\left(\boldsymbol{\Omega};\mathbb{R}^2\right)$,
and the sequence $\partial F_1^{\epsilon}/\partial \eta_2$
converges weakly in $L^2\left(\boldsymbol{\Omega}\right)$
to the gene\-ralized derivative
$\partial F_1/\partial \eta_2$
of the limit 
$F=(F_1,F_2) \in L^2\left(\boldsymbol{\Omega};\mathbb{R}^2\right)$.
But, $F_1^{\epsilon}=u^{\epsilon}=1/L \hspace{2pt} u$ for all 
$\epsilon>0$. Then, the horizontal velocity $u$ 
is a weak solution of the limit formula, Eq. (\ref{eq:2.47}), in 
$L^2\left(\boldsymbol{\Omega_h}\right)$
when the parameter $\epsilon$ tends to $0$.
\end{proof}

\section{Conclusion}\label{s3}

The obtained limit formula suggests that there is no separation of
the Boundary Layer under this conditions, but it shows that it is
possible to study the change of the horizontal velocity of
atmospheric wind with height near the surface by means of simpler
models. There are two immediate problems to work on:
First, to obtain solutions to the Reynolds' limit model
by the application of fractional calculus methods. Second,
to consider the case where the Neumann condition 
$\partial T/\partial z|_{z=0}=m$ is a constant $m \not =0$.

\section*{Acknowledgements}

 The corresponding author, Carla V. Valencia-Negrete, 
 thanks her institution, the 
 {\it Superior School of Physics and Mathematics} of the
 {\it National Polytechnic Institute}, (ESFM-IPN), 
 for the support, under Grant No $306221/275187$
 by \emph{National Council on Science and Technology} (CONACYT).


\end{document}